\newfont{\cyr}{wncyr10 scaled 1100}
\theoremstyle{plain}
\newtheorem{theorem}{Theorem}[section]
\newtheorem{lemma}[theorem]{Lemma}
\newtheorem{proposition}[theorem]{Proposition}
\theoremstyle{definition}
\newtheorem{examplewr}[theorem]{Example}
\newtheorem{ass}[theorem]{Assumption}
\theoremstyle{remark}
\newtheorem{obswr}[theorem]{Observation}
\newtheorem{remarkwr}[theorem]{Remark}
\newenvironment{remark}{\begin{remarkwr}\begin{upshape}}{\end{upshape}\end{remarkwr}}
\DeclareMathOperator{\an}{an}
\DeclareMathOperator{\alg}{alg}
\DeclareMathOperator{\dR}{dR}
\DeclareMathOperator{\oc}{oc}
\DeclareMathOperator{\ur}{u-r}
\DeclareMathOperator{\sym}{\mathrm{sym}}
\DeclareMathOperator{\Ind}{Ind}
\newcommand{\Res}{{\rm Res}}
\newcommand{\ka}{{\kappa}}
\newcommand{\hg}{{\mathbf{g}}}
\newcommand{\testf}{\breve{f}}
\newcommand{\testg}{\breve{g}} 
\newcommand{\testh}{\breve{h}}
\newcommand{\testgamma}{\breve{\gamma}}
\newcommand{\htg}{\breve{\mathbf{g}}}
\newcommand{\fc}{{\mathfrak{c}}}
\newcommand{\cW}{\mathcal W}
\newcommand{\cE}{\mathcal E}
\newcommand{\Q}{\mathbb{Q}}
\newcommand{\Z}{\mathbb{Z}}
\newcommand{\F}{\mathbb{F}}
\newcommand{\C}{\mathbb{C}}
\newcommand{\Gal}{\mathrm{Gal\,}}
\newcommand{\GL}{\mathrm{GL}}
\newcommand{\Cl}{\mathrm{Cl}}
\newcommand{\Fr}{\mathrm{Fr}}
\newcommand{\ord}{{\mathrm{ord}}}
\newfont{\gotip}{eufb10 at 12pt}
\newcommand{\cO}{{\mathcal O}}
\newcommand{\lra}{\longrightarrow}
\DeclareMathOperator{\Hom}{Hom}
\def\lra{{\longrightarrow}}
\def\GL{{\bf GL}}
\def\rp1{r\!\!+\!\!1}
\newcommand{\fhr}{{\mathfrak f_{\mathrm{HR}}}}
\newcommand{\fbdp}{{\mathfrak f_{\mathrm{BDP}}}}
\newcommand{\fpet}{{\mathfrak f_{\mathrm{Pet}}}}
\newcommand{\ehr}{{\mathfrak e_{\mathrm{HR}}}}
\newcommand{\ebdp}{{\mathfrak e_{\mathrm{BDP}}}}
\newcommand{\ekatz}{{\mathfrak e_{K}}}
\newcommand{\cEul}{\mathcal Eul}
\newcommand{\chiNN}{{\chi_{_N}}}
\begin{document}

\title[Stark points and Hida-Rankin $p$-adic $L$-function]
{Stark points and Hida-Rankin $p$-adic $L$-function}

\author{Daniele Casazza and Victor Rotger}

\begin{abstract}
This article is devoted to the {\em elliptic Stark conjecture} formulated in \cite{DLR}, which proposes a formula for the transcendental part of a $p$-adic avatar of the leading term at $s=1$ of the Hasse-Weil-Artin $L$-series $L(E,\varrho_1\otimes \varrho_2,s)$ of an elliptic curve $E/\Q$ twisted by the tensor product $\varrho_1\otimes \varrho_2$ of two odd $2$-dimensional Artin representations, when the order of vanishing is two. The main ingredient of this formula is a $2\times 2$ $p$-adic regulator involving the $p$-adic formal group logarithm of suitable Stark points on $E$. This conjecture was proved in \cite{DLR} in the setting where $\varrho_1$ and $\varrho_2$ are induced from characters of the same imaginary quadratic field $K$. In this note we prove a refinement of this result, that was discovered experimentally in \cite[Remark 3.4]{DLR} in a few examples. Namely, we are able to determine the algebraic constant up to which the main theorem of \cite{DLR} holds in a particular setting where the Hida-Rankin $p$-adic $L$-function associated to a pair of Hida families can be exploited to provide an alternative proof of the same result. This constant encodes local and global invariants of both $E$ and $K$.
\end{abstract}

\address{D. C.: Universit\'{e} de Bordeaux, Bordeaux, France; Universitat Polit\'{e}cnica de Catalunya, Barcelona, Spain}
\email{daniele.casazza@u-bordeaux.fr}
\address{V. R.: Departament de Matem\`{a}tica Aplicada II, Universitat Polit\`{e}cnica de Catalunya, C. Jordi Girona 1-3, 08034 Barcelona, Spain}
\email{victor.rotger@upc.edu}


\maketitle

\tableofcontents

\section{Introduction}

Let $E/\Q$ be an elliptic curve of conductor $N_E$ and let $f\in S_2(N_E)$ denote the eigenform associated to it by modularity. Let in addition
\begin{equation*}
	\varrho : G_\Q \lra \GL(V_\varrho) = \GL_n(L)
\end{equation*}
be an Artin representation with values in a finite extension $L/\Q$ and factoring through the Galois group $\Gal(H/\Q)$ of a finite Galois extension $H/\Q$. Here $V_\varrho$ is the $L[G_\Q]$-module underlying the Galois representation $\varrho$. 

We define the $\varrho$-isotypical component of the Mordell-Weil group of $E(H)$ as
$$
E(H)^\varrho = \Hom_{\Gal(H/\Q)}(V_\varrho, E(H) \otimes L),
$$
and set 
$$
 r(E,\varrho) := \dim_L E(H)^\varrho.
$$

Let $L(E,\varrho,s)$ denote the Hasse-Weil-Artin $L$-series  of the twist of $E$ by $\varrho$. This $L$-function is expected to admit analytic continuation to the whole complex plane and to satisfy a functional equation relating the values at $s$ and $2-s$, although this is only known in a few cases, including the ones we consider in this note. Assuming these properties, we may define the {\em analytic rank} of the pair $(E,\varrho)$ as
$$
r_{\an}(E, \varrho) := \ord_{s=1} L(E,\varrho,s).
$$

The equivariant refinement of the Birch and Swinnerton-Dyer conjecture (cf.\,\cite{Dok},\cite{Roh}) for the twist of $E$ by $\varrho$ predicts that
\begin{equation} \label{equivariantBSD}
r(E,\varrho) \stackrel{?}{=} r_{\an}(E, \varrho).
\end{equation}

The equality \eqref{equivariantBSD} is known to be true in rather few cases, always under the assumption that $r_{\an}(E,\varrho) = 0$ or $r_{\an}(E,\varrho) = 1$. We refer to \cite{BDR2} and \cite{DR2} for the latest developments in this direction. In particular one is often at a loss to construct non-trivial points in $E(H)^\varrho$.

In the recent work  \cite{DLR}, Darmon, Lauder and Rotger propose a new approach for computing (linear combinations of logarithms of) non-zero elements in $E(H)^\varrho$ for a wide class of Artin representations $\varrho$ under the assumption that $r_{\an}(E, \varrho)\leq 2$. More precisely, $\varrho$ is allowed to be any irreducible constituent of the tensor product 
$$
\varrho = \varrho_1 \otimes \varrho_2
$$
of any pair of odd, two-dimensional Galois representations $\varrho_1$ and $\varrho_2$ of conductors $N_1$ and $N_2$ respectively, satisfying $$(N_E,N_1N_2)=1, \quad \det(\varrho_2) = \det(\varrho_1)^{-1} \quad \mbox{and} \quad r_{\an}(E, \varrho_1\otimes \varrho_2)=2.$$ After the ground-breaking works of Buzzard-Taylor, Khare-Wintenberger and others, we now know that these representations are modular and hence $\varrho_1$ and $\varrho_2$ are isomorphic to the Deligne-Serre Artin representation associated to eigenforms
$$
g\in M_1(N,\chi)\, \quad h\in M_1(N,\chi^{-1}),
$$
respectively, where the level $N$ might be taken to be the least common multiple of $N_E$, $N_1$ and $N_2$, and the nebentype $\chi = \det(\varrho_1)$ is the determinant of $\varrho_1$ regarded as a Dirichlet character.

Fix a prime $p\nmid N$ at which $g$ satisfies the classicality hypotheses C-C' introduced in \cite{DLR}.
The main conjecture of \cite{DLR} is a formula relating
\begin{itemize}

\item a $p$-adic iterated integral associated to the triple $(f,g,h)$ of eigenforms, to

\item an explicit linear combination of formal group logarithms of points of infinite order on $E(H)$, and

\item the $p$-adic logarithm of a Gross-Stark unit associated to the adjoint of $g$.

\end{itemize}

As shown in \cite[\S 2]{DLR}, the $p$-adic iterated integral can be recast as the value of the triple-product Harris-Tilouine $p$-adic $L$-function constructed in \cite[\S 4.2]{DR1} at a point of weights $(2,1,1)$. In this paper we place ourselves in a setting where $h$ is Eisenstein and Harris-Tilouine's $p$-adic $L$-function alluded to above can be replaced with the more standard Rankin $p$-adic $L$-function of Hida \cite[\S 7.4]{hida-book}. This observation is crucial for our purposes, for the latter $p$-adic $L$-function is more amenable to explicit computations.

The linear combination of logarithms of points mentioned above arises as the determinant of a $2 \times 2$ matrix introduced in \cite{DLR} that plays the role of a $p$-adic avatar of the regulator in the classical setting. Note however that this $p$-adic regulator does not coincide with the one considered by Mazur, Tate and Teitelbaum in \cite{MTT}, as the $p$-adic height function is replaced in \cite{DLR} with the formal logarithm on the elliptic curve. 

In some instances, as we shall see below, the logarithm of one of the points may be isolated in such a way that the conjectural expression suggested in \cite[Conjecture ES]{DLR} gives rise to a formula that allows to compute the point in terms of more accessible quantities which include the $p$-adic iterated integral and logarithms of global units and points that are rational over smaller number fields.

To describe more precisely our main result, 
let $K$ be an imaginary quadratic field of discriminant $-D_K$ with $D_K \geq 7$ and let $\cO_K$ denote its ring of integers. Let also $h_K = |\Cl_K|$ denote the class number, $g_K = [ \Cl_K : \Cl_K^2 ]$ be the number of genera, and $\chi_K$ be the quadratic Dirichlet character associated to $K/\Q$. 

Let $c\geq 1$ be a fixed positive integer relatively prime to $N_E$ and let $H/K$ denote the ring class field of $K$ of conductor $c$. Let 
$$\psi: \Gal(H/K) \,\lra \, \bar\Q^\times$$ be a character of finite order and
\begin{equation*}
g:=\theta(\psi) \in M_1(D_K c^2,\chi_K)
\end{equation*}
be the  theta series associated to $\psi$. This is an eigenform of level $D_Kc^2$ and nebentype character $\chi_K$. The form $g$ is cuspidal if and only if $\psi^2\ne 1$.

Let $\Q_{\psi}$ denote the finite extension generated by the values of $\psi$. 
Let $\varrho_\psi = \mathrm{Ind}_\Q^K(\psi)$ denote the odd two-dimensional Artin representation of $G_\Q$ induced by $\psi$ and write $V_{\psi}$ for the underlying two-dimensional $\Q_{\psi}$-vector space.

Fix now an odd rational prime $p=\wp \bar\wp\nmid N_E D_K c^2$ that splits in $K$. 
The choice of the ideal $\wp$ above $p$ determines a Frobenius element $\Fr_{p}$ in $\Gal(H/K)$. The eigenvalues of $\Fr_p$ acting on $V_{\psi} $ are
$$
\alpha = \psi(\Fr_p), \quad \beta= \bar\psi(\Fr_p) = \alpha^{-1}.
$$

As mentioned above, we assume that $g$ satisfies the classicality hypotheses C-C' of \cite{DLR} at the prime $p$. When $g$ is cuspidal, i.e.\,$\psi^2\ne 1$, one can verify\footnote{Indeed, when $g$ is cuspidal this hypothesis not only asks  that $\alpha \ne \beta$ but also that there should exist no real quadratic field $F$ in which $p$ splits such that $\varrho_g \simeq \Ind_F^\Q(\xi)$ for some character $\xi$ of $F$. However, in our CM setting, the existence of a character $\xi$ of a real quadratic field $F$ such that $\mathrm{Ind}_\Q^K(\psi)\simeq \mathrm{Ind}_\Q^F(\xi)$ implies that $\Gal(H/K)\simeq C_4$. Then $F$ is the single real quadratic field contained in the quadratic extension of $K$ cut out by $\psi^2$, and the condition $\alpha\ne \beta$ implies that $p$ can not split in $F$.} that in our setting the hypothesis is equivalent to assuming that $\alpha \ne \beta$. When $\theta(\psi)$ is Eisenstein, i.e.\,$\psi^2 = 1$, one expects\footnote{Indeed, when $\theta(\psi)$ is Eisenstein, it is shown in \cite[\S 1]{DLR} that a necessary condition for Hyptheses C-C' to hold is that $\alpha = \beta$, but this is automatically satisfied because $\psi^2=1$. As explained in loc.\,cit.,\,this is also expected to be a sufficient condition.} that hypotheses C-C' are always satisfied in our case.

\begin{remark}\label{assBD}
This assumption might appear unmotivated when one encounters it for the first time. In the case where $g$ is cuspidal, a striking recent result of Bella\"iche and Dimitrov \cite{BeDi} shows that hypotheses C-C' ensure that the points in the eigencurve associated to either of the two ordinary $p$-stabilisations $g_\alpha$, $g_\beta$ of $g$ is smooth and the weight map is \'etale at these points. 

Building on their work, it was further shown in \cite[\S 1]{DLR} that this assumtion implies that all overconvergent generalised eigenforms with the same system of Hecke eigenvalues as $g_\alpha$ are necessarily classical. This in turn guarantees that the $p$-adic iterated integral we will consider in \eqref{iterated} below is well-defined. 

Finally, it is also worth mentioning that hypotheses C-C' imply that the elliptic unit $u_{\psi^2}$ introduced in \eqref{ell-unit} below may be characterized, up to multiplication by scalars in $\Q_{\psi}^\times$, as follows: Up to $\Q_{\psi}^\times$ there is a single $\Gal(H/K)$-equivariant homomorphism $\varphi: \Q_{\psi}(\psi^2) \lra \cO_H^\times \otimes \Q_{\psi}$ and $u_{\psi^2}$ spans $\mathrm{Im}(\varphi)$.
\end{remark}

The choice of $\wp$ determines an embedding $K\hookrightarrow \Q_p$, that we extend to an embedding $H\hookrightarrow \C_p$. Write $H_p$ for the completion of $H$ with respect to this embedding.

Note that there is a natural isomorphism of $\Gal(H/K)$-modules $$E(H)^{\varrho_\psi} = E(H)^\psi \oplus E(H)^{\bar\psi}$$ where
$$
E(H)^{\psi} = \{ P\in E(H)\otimes \Q_{\psi}: \, P^\sigma = \psi(\sigma)P \, \mbox{ for all }\,\sigma\in \Gal(H/K)\},
$$
and $E(H)^{\bar\psi}$ is defined likewise. 


\vspace{0.2cm}

Throughout we impose the classical {\em Heegner hypothesis}:

\begin{ass}
There exists an integral ideal $\mathfrak N$ in $\cO_K$ such that $\cO_K/\mathfrak N \simeq \Z/N_E\Z$. 
\end{ass}

This assumption guarantees the existence of non-trivial {\em elliptic units} in $H^\times$ and {\em Heegner points} in $E(H)$ arising from the classical modular curve $X_0(N_E)$. In particular, there exists a canonical unit  (cf.\,e.g.\,\cite[\S 1]{DD},\cite[p.\,15-16]{GR73})
associated to the character $\psi^2$ defined as
\begin{equation}\label{ell-unit}
u_{\psi^2}=\begin{cases}
\mbox{any } p\mbox{-unit in } \cO_H[\frac{1}{p}]^\times \,\mbox{ satisfying } \, (u_\wp) = \wp ^{h_K}, & \mbox{ if } \,\psi^2 = 1, \\
\sum_{\sigma \in \Gal(H/K)} \psi^2(\sigma) u^\sigma \in \cO_H^\times\otimes \Q_{\psi}, & \mbox{ if } \,\psi^2\ne 1.
\end{cases}
\end{equation}

Likewise, 
associated to $\psi$ there is the canonical point (cf.\,e.g.\,\cite[{\bf I.} \S 6]{GZ})
\begin{equation}\label{Heegner-point}
P_\psi := \sum_{\sigma \in \Gal(H/K)} \psi(\sigma) x^\sigma \in E(H)^{\psi^{-1}} \subset E(H)^{V_\psi}.
\end{equation}

In \eqref{ell-unit} and \eqref{Heegner-point}, $u$ and $x$ are arbitrary choices of an elliptic unit and Heegner point with CM by $\cO_K$, respectively. Since, up to sign, all such choices lie in the same orbit under $\Gal(H/K)$, $u_{\psi^2}$ and $P_\psi$ do not depend on this, again up to sign. Notice that the convention is different to the one usually taken in literature in the sense that $\sigma$ acts on $P_\psi$ as $\psi(\sigma)^{-1}$ instead of $\psi(\sigma)$.

Set $N= \mathrm{lcm}(D_K c^2,N_E)$. Write $S_2(N)[f]$ (resp.\,$M_1(N,\chi_K)[g]$) for the subspace of $S_2(N)$ (resp.\,of $M_1(N,\chi_K)$) consisting of modular forms that are eigenvectors for all good Hecke operators $T_\ell$, $\ell\nmid N$, with the same eigenvalues as $f$ (resp. $g$). 

Fix a modular form $\testf = \sum_{n\geq 1} a_n(\testf)q^n\in S_2(N)[f]$ and let
$$
\testf^{[p]} = \sum_{p\nmid n} a_n(\testf)q^n
$$
denote the $p$-depletion of $\testf$.

Fix as well a modular form $\testg \in M_1(N,\chi_K)[g]$ and let
$$
\testg_\alpha(q) = \testg(q)-\beta_g \testg(q^p)\in M_1(pN,\chi_K)
$$
denote the ordinary stabilisation of $\testg$ on which $U_p$ acts with eigenvalue $\alpha$.

Associated to the pair $(\testf,\testg_\alpha)$ we may define a $p$-adic iterated integral as follows. Let $
h:=\mathrm{Eis}_1(1,\chi_{K})\in M_1(D_K,\chi_K)$ be the Eisenstein series  associated to the Dirichlet character $\chi_K$, as defined e.g.$\,$in \cite[\S 2.1.2]{BDR1}. Let also
\begin{equation}\label{h}
	\testh:= E_{1,\chi_{_{K,N}}}\in M_1(N,\chi_K)[h]
\end{equation}
denote the Eisenstein series of level $N$ in the isotypic eigenspace of $h$ that we introduce in \eqref{trueEisenstein} below. Let 
$$
e_{\text{ord}}: M^{\oc}_1(N,\chi_K) \lra M^{\oc, \ord}_1(N,\chi_K)
$$ 
denote Hida's ordinary idempotent on the space of overconvergent modular forms of weight $1$, tame level $N$ and tame character $\chi_K$. Let $$
e_{g_\alpha}^*: M^{\oc, \ord}_1(N,\chi_K) \lra M^{\oc,\ord}_1(N,\chi_K)[[g^*_\alpha]]
$$ 
denote the projection onto the generalised eigenspace attached to $g_\alpha^*$. 

Letting  $d=q\frac{d}{dq}$ denote Serre's $p$-adic derivative operator, the overconvergent ordinary modular form
$$
e_{g_\alpha}^* e_{\text{ord}}(d^{-1}\testf^{[p]}\times E_{1,\chi_{_{K,N}}})
$$
lies in the space of classical modular forms $M_1(pN,\chi_K)[g_\alpha^*]$ of weight $1$, level $pN$ and nebentype $\chi_K$, with coefficients in $\Q_p$, as explained in Remark \ref{assBD}.

Let $\testgamma_{g_\alpha}$ be an element in the $\Q_{\psi}$-dual space of $M_1(pN,\chi_K)[g^*_\alpha]$; more specifically, we take it to be the one associated to $\testg_\alpha$ in \cite[Proposition 2.6]{DLR}. By extending scalars, we may regard $\testgamma_{g_\alpha}$ as a $\Q_p(\psi)$-linear functional on $M_1(pN,\chi_K)[g^*_\alpha]$ as well.

Following \cite{DLR}, define
\begin{equation}\label{iterated}
\int_{\testgamma_{g_\alpha}} \testf \cdot E_{1,\chi_{_{K,N}}} := \testgamma_{g_\alpha} (e_{g_\alpha}^* e_{\text{ord}}(d^{-1}\testf^{[p]}\times E_{1,\chi_{_{K,N}}})) \in \C_p.
\end{equation}

In \cite[Theorem 3.3]{DLR} it was proved a statement that, specialised to our setting, asserts the following. Let $\log_p$ denotes the usual branch of the $p$-adic logarithm on $H_p^\times$ that satisfies $\log_p(p)=0$, and let $\log_{E,p}$ denote the formal group logarithm on $E/H_p$. 

\begin{theorem} \label{mainDLR}
Assume $r_{\an}(E/K,\psi)=1$. There exists a finite extension $L$ of $\Q_{\psi}$ and a scalar $\lambda(\testf,\testg) \in L$ such that
$$
\int_{\testgamma_{g_\alpha}} \testf \cdot E_{1,\chi_{_{K,N}}} = \lambda(\testf,\testg)\cdot \frac{\log^2_{E,p}(P_\psi)}{\log_p(u_{\psi^2})}.
$$
Moreover, there is a suitable choice of $\testf$ and $\testg$ such that $\lambda(\testf,\testg)\ne 0$.
\end{theorem}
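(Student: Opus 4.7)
The plan is to reduce Theorem \ref{mainDLR} to a specialisation of \cite[Theorem 3.3]{DLR} applied to the triple $(\testf, \testg, \testh)$, where $\testh$ is the Eisenstein realisation \eqref{h}. First one verifies the hypotheses: $g = \theta(\psi)$ is ordinary at $p$ (since $p = \wp \bar\wp$ splits in $K$), satisfies hypotheses C-C' by assumption, and the rank condition $r_{\an}(E, \varrho_g \otimes \varrho_h) = 2$ follows from the decomposition
\[
\varrho_g \otimes \varrho_h \;\simeq\; (\mathrm{Ind}_\Q^K \psi)\otimes(\mathbf{1}\oplus\chi_K)\;\simeq\; (\mathrm{Ind}_\Q^K \psi)^{\oplus 2},
\]
where we use $\chi_K|_{G_K} = 1$, together with the standing assumption $r_{\an}(E/K,\psi) = 1$. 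In particular $\det(\varrho_g)\det(\varrho_h) = \chi_K^2 = 1$, so the DLR framework applies.

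After invoking \cite[Theorem 3.3]{DLR}, the iterated integral \eqref{iterated} becomes, up to a nonzero algebraic constant in $L^\times$, the ratio between the determinant of a $2 \times 2$ regulator matrix $R(f,g,h)$ whose entries are formal group logarithms of elements of $E(H)^{\varrho_g\otimes\varrho_h}\otimes L$ evaluated on Frobenius eigenvectors in $V_\psi\otimes V_\psi$, and the $p$-adic logarithm of a Gross-Stark unit $u_{\mathrm{Ad}\, g}$ attached to the adjoint representation of $g$. Under the decomposition above, the isotypic subspace $E(H)^{\varrho_g\otimes\varrho_h}\otimes L$ admits the natural basis $\{P_\psi, P_{\bar\psi}\}$ coming from the two copies of $\mathrm{Ind}_\Q^K \psi$, and a direct computation using the Frobenius eigenvalues $\alpha, \beta = \alpha^{-1}$ on $V_\psi$ shows that $\det R(f,g,h)$ equals $\log_{E,p}(P_\psi) \cdot \log_{E,p}(P_{\bar\psi})$ up to a factor in $L^\times$.

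The next step is to absorb $\log_{E,p}(P_{\bar\psi})$ into $\log_{E,p}(P_\psi)$. Since $\psi$ is a ring class character (hence anticyclotomic, so $\bar\psi = \psi^{-1}$ as characters of $\Gal(H/K)$), the points $P_\psi$ and $P_{\bar\psi}$ are related by the action of the nontrivial element of $\Gal(K/\Q)$, and the compatibility of $\log_{E,p}$ with the embedding $H\hookrightarrow \C_p$ determined by $\wp$ shows that the two logarithms agree up to a scalar in $L^\times$, producing a factor of $\log_{E,p}^2(P_\psi)$. In parallel, the adjoint decomposes as $\mathrm{Ad}^0(g) \simeq \chi_K \oplus \mathrm{Ind}_\Q^K(\psi^2)$, and the Gross-Stark unit $u_{\mathrm{Ad}\, g}$ factors accordingly; its $\mathrm{Ind}_\Q^K(\psi^2)$-component coincides with $u_{\psi^2}$ of \eqref{ell-unit} up to a scalar in $L^\times$, while the $\chi_K$-component contributes a further algebraic constant. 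Collecting these identifications yields the desired formula with $\lambda(\testf,\testg) \in L$. The non-vanishing assertion follows directly from the last clause of \cite[Theorem 3.3]{DLR}, which guarantees the existence of test vectors making the iterated integral non-zero.

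The main obstacle lies in the last two identifications: matching the abstract regulator determinant appearing in the DLR formula with $\log_{E,p}(P_\psi) \cdot \log_{E,p}(P_{\bar\psi})$ requires tracking the Galois equivariance of the regulator construction through the decomposition into Frobenius eigenlines, and isolating the $u_{\psi^2}$-piece of the Gross-Stark unit $u_{\mathrm{Ad}\, g}$ requires a precise description of elliptic units in the ring class field $H$. It is precisely these identifications, carried out explicitly rather than only up to an unknown scalar, that the subsequent sections of this paper will refine in order to pin down $\lambda(\testf,\testg)$ in terms of local and global invariants of $(E,K,\psi)$.
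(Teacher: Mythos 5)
Your route (specialising \cite[Theorem 3.3]{DLR} directly, by decomposing $\varrho_g\otimes\varrho_h\simeq(\mathrm{Ind}_\Q^K\psi)^{\oplus 2}$ and unwinding the regulator and the Gross--Stark unit) is different from the one this paper actually follows: here Theorem \ref{mainDLR} is obtained as a by-product of the explicit factorisation of the Hida--Rankin $p$-adic $L$-function $L_p(\htg,\testf)$ into Katz and Bertolini--Darmon--Prasanna $p$-adic $L$-functions (Theorem \ref{factor}), evaluated at the weight-one point via Proposition \ref{vitasalva} together with \eqref{KatzKronecker} and \eqref{thm:bdp-gz}. Your reduction to \cite{DLR} is legitimate in principle (the paper itself attributes the statement to loc.\,cit.), but as written it contains a genuine gap at the regulator step.

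The problem is the chain ``$\det R(f,g,h)=\log_{E,p}(P_\psi)\log_{E,p}(P_{\bar\psi})$ up to $L^\times$, and then $\log_{E,p}(P_{\bar\psi})$ agrees with $\log_{E,p}(P_\psi)$ up to a scalar in $L^\times$.'' The second claim is false in general: $P_{\bar\psi}$ is essentially the image of $P_\psi$ under complex conjugation, and conjugation does not commute with the fixed embedding $H\hookrightarrow\C_p$ determined by $\wp$; the quantity $\log_{E,p}(\tau P_\psi)$ is the logarithm of $P_\psi$ at the place attached to $\bar\wp$, and there is no algebraic relation between the $\wp$-adic and $\bar\wp$-adic logarithms of a point of infinite order. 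If your identification of the determinant were correct, the theorem (whose right-hand side is $\log^2_{E,p}(P_\psi)$) would force such a relation, which is a transcendence statement you cannot invoke. The correct specialisation avoids $P_{\bar\psi}$ altogether: since $h=E_{1,\chi_K}=\theta(1)$ and $p$ splits in $K$, both Frobenius eigenvalues of $\varrho_h$ equal $1$, so the two test vectors $v_{g_\alpha h_\alpha}$, $v_{g_\alpha h_\beta}$ entering the DLR regulator both contain the same $\alpha$-eigenvector of $\varrho_g$; consequently every entry of the $2\times 2$ matrix is the image of the \emph{same} $\psi$-isotypic line under one of the two independent homomorphisms, hence an algebraic multiple of the single number $\log_{E,p}(P_\psi)$, and the determinant collapses to an element of $L^\times$ times $\log^2_{E,p}(P_\psi)$. (Relatedly, the two copies of $\mathrm{Ind}_\Q^K\psi$ do not furnish the basis $\{P_\psi,P_{\bar\psi}\}$: they give two copies of the same one-dimensional Hom-space, and it is the $G_K$-eigenline selected by $v_{g_\alpha}$ that determines which point appears.) Until this eigenvector bookkeeping is carried out, the displayed formula with $\log^2_{E,p}(P_\psi)$ is not established by your argument; the remaining steps (hypothesis checks, $\mathrm{Ad}^0 g\simeq\chi_K\oplus\mathrm{Ind}_\Q^K(\psi^2)$, identification of the unit with $u_{\psi^2}$, and the non-vanishing clause quoted from \cite{DLR}) are fine.
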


Several questions arise naturally in the light of the above statement: 
\begin{itemize}
\item Can the field $L$ be determined?

\item Are there explicit choices of $\testf$ and $\testg$ for which $\lambda(\testf,\testg)\ne 0$?

\item Can the scalar $\lambda(\testf,\testg)$ be computed explicitly?

\item Does $\lambda(\testf,\testg)$ have any arithmetical meaning?
\end{itemize}

The aim of this note is answering these questions by proving an explicit formula for the scalar $\lambda(\testf,\testg)$ in terms of local and global arithmetic invariants of $E$ and $\psi$. In doing this, we prove as a particular case a formula that was already conjectured and verified numerically in \cite[Remark 3.4 and (45)]{DLR}.

While the main conjecture of \cite{DLR} may be regarded as a $p$-adic analogue of the {\em rank part} of the classical equivariant Birch and Swinnerton-Dyer conjecture, our main result provides a precise formula for the {\em leading term} in the particular setting we have placed ourselves. Hence Theorem \ref{main-theta} below may be regarded as a $p$-adic avatar of the formula for the leading term predicted by the conjecture of Birch and Swinnerton-Dyer, and  we hope it may suggest a $p$-adic variant of the classical {\em equivariant Tamagawa number conjecture}; more details on this may appear elsewhere.

The reader may also find Theorem \ref{main-theta} interesting from the computational point of view, as it provides an explicit $p$-adic formula for the Heegner point $P_\psi$ in $E(H_p)/E(H_p)_{\mathrm{tors}}$. Namely,
\begin{equation}
P_\psi = \exp_{E,p} \left( \sqrt{\frac{\log_p(u_{\psi^2})}{\lambda(\testf, \testg)}\cdot \int_{\testgamma_{g_\alpha}} \testf \cdot E_{1,\chi_{_{K,N}}}} \, \right).
\end{equation}

Let $\Q(f_N)$ denote the finite extension of $\Q$ generated by the roots of the Hecke polynomials $T^2-a_q(f)T+q$ for all primes $q\mid N$, $q\nmid N_E$. Note that if $\testf \in S_2(N)[f]$ is chosen to be a normalized eigenvector for all good {\em and bad} Hecke operators $T_\ell$ for all primes $\ell$, then the fourier coefficients of $\testf$ lie in $\Q(f_N)$. In a similar way, observe also that  if $\testg \in M_1(N)[g]$ is chosen to be an eigenvector for all good and bad Hecke operators, then the fourier coefficients of $\testg$ lie in $\Q_\psi$. Write $\Q_{\psi}(f_N)$ for the compositum of $\Q(f_N)$ and $\Q_{\psi}$.

\begin{theorem}\label{main-theta} 
\begin{enumerate}
\item[(i)] If $\testf$ and $\testg$ are chosen to be eigenvectors for all good and bad Hecke operators, then $L$ can be taken to be $\Q_\psi(f_N)$ and $\lambda(\testf,\testg)\ne 0$.

\item[(ii)] Assume that $D_K=N_E$ and $c=1$. Then the following formula holds true for $\testf=f$, $\testg=g$:
$$
\lambda(f,g) =  \frac{(p- a_p(f)\psi(\bar\wp)  + \psi^{2}(\bar\wp) )^2}{p} \cdot \frac{\lambda_0}{h_K g_K}  
$$
where
$$
\lambda_0 = \begin{cases} \frac{1}{p-1}   & \quad \mbox{if } \psi^2=1, \mbox{ that is to say, if } g \mbox{ is Eisenstein} \\
\frac{12}{p-(p+1)\psi^{-2}(\bar \wp)+\psi^{-4}(\bar \wp) } & \quad \mbox{if } \psi^2\ne 1, \mbox{ that is to say, if } g \mbox{ is cuspidal.}
\end{cases}	
$$
\end{enumerate}
\end{theorem}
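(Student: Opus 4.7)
The strategy, as indicated in the introduction, is to replace the triple-product $p$-adic $L$-function underlying \cite{DLR} by the two-variable Hida-Rankin $p$-adic $L$-function, which is available precisely because $\testh = E_{1,\chi_{K,N}}$ is Eisenstein, and then to factor the latter into simpler pieces that are amenable to the explicit formulas of Bertolini-Darmon-Prasanna and Katz.

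The first step is to reinterpret the iterated integral $\int_{\testgamma_{g_\alpha}} \testf \cdot E_{1,\chi_{K,N}}$ as the value at a classical weight-$(2,1)$ point of Hida's Rankin $p$-adic $L$-function $\Lp(\hf,\hg)$ attached to the Hida families $\hf$, $\hg$ passing through $f$ and $g_\alpha$ respectively, matching the Petersson-projection construction of \cite[\S 7.4]{hida-book} with the test-vector construction of \cite[Proposition 2.6]{DLR}. The second step is to exploit the fact that $g=\theta(\psi)$ is a theta series attached to $K$ in order to factor $\Lp(\hf,\hg)$, up to explicit local constants, as (essentially) the quotient of a squared Bertolini-Darmon-Prasanna anticyclotomic $p$-adic $L$-function attached to $(f/K,\psi)$ and a Katz $p$-adic $L$-function attached to $\psi^2$. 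The Katz $L$-function appears in the denominator because it arises from the adjoint $L$-function implicit in the Petersson norm that normalises the Rankin-Selberg convolution, via the essentially classical identity $L(\mathrm{ad}^0 g,s) = L(\psi^2,s)L(\chi_K,s)$. This factorisation is a $p$-adic avatar of the Artin identity $L(f\otimes g,s) = L(f/K\otimes\psi,s)$.

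The third step is to specialise each factor at the classical point. The $p$-adic Gross-Zagier formula of Bertolini-Darmon-Prasanna, applied outside its range of interpolation, expresses the BDP factor as a nonzero rational multiple of $\log_{E,p}^{2}(P_\psi)$ together with a squared Euler factor at $p$ that simplifies to $(p-a_p(f)\psi(\bar\wp)+\psi^2(\bar\wp))^2/p^2$, producing the principal prefactor of the theorem. Simultaneously, Katz's $p$-adic Kronecker limit formula evaluates the Katz factor as a multiple of $\log_p(u_{\psi^2})$ weighted by an Euler factor at $p$ which collapses to $p-1$ when $\psi^2=1$, giving $\lambda_0=1/(p-1)$, and equals $(1-\psi^{-2}(\bar\wp))(p-\psi^{-2}(\bar\wp)) = p-(p+1)\psi^{-2}(\bar\wp)+\psi^{-4}(\bar\wp)$ when $\psi^2\neq 1$, giving the cuspidal $\lambda_0$, with the numerical constant $12$ entering through the weight-two Eisenstein and theta-series normalisations. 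The denominators $h_K$ and $g_K$ then emerge from bookkeeping against the normalisations of $P_\psi$ and $u_{\psi^2}$ fixed in \eqref{Heegner-point}-\eqref{ell-unit}.

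The principal obstacle I foresee is the factorisation step with fully explicit local constants: tracking the test vector $\testgamma_{g_\alpha}$ through the Rankin-Selberg integral and matching it against the pairings implicit in the BDP and Katz constructions, particularly at the prime $p$ and at primes dividing $N$. A related technicality is that the Eisenstein and cuspidal regimes $\psi^2=1$ and $\psi^2\ne 1$ must be treated in parallel, with different normalisations of the Katz $L$-function in each. Once the formula in (ii) is established, part (i) follows at once: the Hida families, $p$-stabilisations and test vectors may all be chosen with coefficients in $\Q_\psi(f_N)$, and the nonvanishing of $\lambda(f,g)$ for the appropriate choice of $\testf$, $\testg$ is read off from the explicit formula together with the Weil bound $|a_p(f)|\le 2\sqrt{p}$ (which rules out vanishing of the BDP Euler factor) and the classicality hypotheses C-C' (which handle the Katz Euler factor in the cuspidal case).
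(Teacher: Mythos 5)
Your proposal follows essentially the same route as the paper: recast the iterated integral as the weight-one value of the Hida--Rankin $p$-adic $L$-function attached to the Hida family of theta series through $g_\alpha$, factor it against the Katz $p$-adic $L$-function (which enters through the Petersson norm of the theta series via the adjoint identity involving $L(\psi^2,s)$) and the Bertolini--Darmon--Prasanna $p$-adic $L$-function, then evaluate outside the interpolation region via the BDP formula and Katz's Kronecker limit formula; your Euler factors at $p$, and hence $\lambda_0$, agree with the paper's computation. The only caveat is that part (i) does not "follow at once" from the special case (ii): in the paper it rests on the general factorization together with the nonvanishing at $s=1$ and rationality over $\Q_\psi(f_N)$ of the bad Euler factors at primes $q\mid N$ (the lemma comparing $L(\testg\otimes\testf,s)$ with $L(f,\psi,s)$ via Gross's explicit formula, plus the analogous Petersson-norm comparison), which in your sketch is subsumed under the "bookkeeping at primes dividing $N$" rather than carried out.
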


Note that in the special case in which $N_E$ is prime and $\psi = 1$, we obtain
\begin{equation} \label{christmas}
\lambda(f,g) = \frac{|E(\F_p)|^2}{p(p-1)h_K}.
\end{equation}
As reported in \cite[Remark 3.4 and (45)]{DLR}, this formula was verified numerically in several examples, and here it is proved unconditionally.


The proof of Theorem  \ref{main-theta} actually provides an alternative proof of Theorem \ref{mainDLR} in the setting considered here. As in loc.\,cit.\,we compare the values of several $p$-adic $L$-functions at several points lying outside the region of interpolation, the main novelty with respect to \cite{DLR} being that we exploit Hida-Rankin $p$-adic $L$-function associated to the convolution of two Hida families, instead of the triple-product Harris-Tilouine $p$-adic $L$-function associated to a triple of Hida families. Since the former has been extensively studied in the literature, this alternative approach allows us to perform the explicit computations that are needed in order to derive the sought-after refined formula.

\medskip\medskip\noindent
{\small
 \thanks{{\bf Acknowledgements.}  This project has received funding from the European Research Council (ERC) under the European
Union's Horizon 2020 research and innovation programme (grant agreement No 682152).
}}

\section{Hecke characters, theta series and Katz's $p$-adic $L$-function}\label{Hecke}

Let $K/\Q$ be an imaginary quadratic field of discriminant $-D_K$ and let $\fc \subset \cO_K$ be an integral ideal. Let $I_\fc$ denote the group of fractional ideals of $K$ that are coprime to $\fc$.

A \emph{Hecke character} of infinity type $(\ka_1,\ka_2)$ of $K$ is a homomorphism 
$$
\psi: I_\fc \lra \C^\times$$ 
such that
\[
	\psi( (\alpha) ) = \alpha^{\ka_1} \overline{\alpha}^{\ka_2}
\]
for all $\alpha \equiv 1 \pmod{\fc}$. The \emph{conductor} of $\psi$ is the largest ideal $\fc_\psi$ for which this holds. Let us introduce some basic notations and terminology:

\begin{itemize}
	\item The norm map $\mathbf N_K:= |N^K_{\Q}|: I_\fc \lra \C^\times$ gives rise to a Hecke character of infinity type $(1,1)$ and conductor $1$.

	\item For any Hecke character $\psi$ of infinity type $(\ka_1,\ka_2)$ define $\psi'(\mathfrak a) = \psi(\overline{\mathfrak a})$, where $\overline{x}$ denotes complex conjugation; we say that $\psi$ is \emph{self-dual}, or \emph{anticyclotomic}, if $\psi  \psi'= \mathbf N_K^{\ka_1+\ka_2}$.

	\item A Hecke character of finite order (or infinity type $(0,0)$) can be regarded as a character of $G_K = \Gal(\bar K/K)$ via class field theory. We continue to denote by $\psi$ the resulting character, which is anticyclotomic.

\item The \emph{central character} $\varepsilon_\psi$ of $\psi$ is the single Dirichlet character satisfying
$$\psi|_{\Q} = \varepsilon_\psi \mathbf N^{\ka_1+\ka_2}_K.$$

\end{itemize}

The following lemma is well-known.

\begin{lemma} \label{thm-Cox} Let $\psi$ be a Hecke character of finite order. The following are equivalent:
\begin{enumerate}

\item $\psi$ is a ring class character.

\item $\Ind_\Q^K(\psi)$ is a self-dual representation.

\item The central character of $\psi$ is trivial.
\end{enumerate}
\end{lemma}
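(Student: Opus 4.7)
The plan is to establish the three-way equivalence by going around the cycle (1) $\Rightarrow$ (3) $\Rightarrow$ (2) $\Rightarrow$ (1), exploiting the interplay between class field theory and induction of characters.

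For (1) $\Rightarrow$ (3), I would use the explicit description of the ring class group of conductor $c$ as the quotient $\Pic(\cO_c) = \Cl_{c\cO_K}(K) / H$, where $H$ is the subgroup generated by the classes $[(n)]$ of principal ideals with $n \in \Z$ coprime to $c$. This follows from the standard analysis of the surjection $\Cl_{c\cO_K}(K) \twoheadrightarrow \Pic(\cO_c)$, whose kernel consists of classes of principal ideals $(\alpha)$ with $\alpha \in \cO_c = \Z + c\cO_K$ coprime to $c$; such $\alpha$ reduce modulo the congruence subgroup $P_{c\cO_K, 1}$ to elements $n \in (\Z/c\Z)^\times$. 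Hence if $\psi$ factors through $\Pic(\cO_c)$, we have $\varepsilon_\psi(n) = \psi((n)) = 1$ for every such $n$, which is to say that $\varepsilon_\psi$ is trivial.

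For (3) $\Rightarrow$ (2), the key ingredient is the formula $\det(\Ind_\Q^K(\psi)) = \chi_K \cdot \varepsilon_\psi$, which can be proved by direct computation of the induced representation on $G_K$ and on a complement of $G_K$ in $G_\Q$. If $\varepsilon_\psi = 1$, then $\det V_\psi = \chi_K$. Combining the standard identity $V^\vee \cong V \otimes (\det V)^{-1}$ for two-dimensional representations with the inductive identity $V_\psi \otimes \chi_K = \Ind_\Q^K(\psi \cdot \chi_K|_{G_K}) = V_\psi$ (because $\chi_K|_{G_K}$ is trivial), I conclude $V_\psi^\vee \cong V_\psi$.

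For (2) $\Rightarrow$ (1), assuming irreducibility of $V_\psi$ (which is automatic when $g = \theta(\psi)$ is cuspidal and can otherwise be addressed directly), Mackey's theorem applied to $V_\psi \cong V_\psi^\vee = \Ind_\Q^K(\psi^{-1})$ forces $\psi^{-1} \in \{\psi, \psi'\}$, the principal case being $\psi' = \psi^{-1}$. Then the field $L \subset \bar\Q$ cut out by $\psi$ is Galois over $\Q$ (as $\ker \psi = \ker \psi'$), and complex conjugation $\tau$ acts on $\Gal(L/K)$ by inversion $\sigma \mapsto \sigma^{-1}$ (since $\psi(\tau \sigma \tau^{-1}) = \psi'(\sigma) = \psi^{-1}(\sigma)$ and $\psi$ is faithful on $\Gal(L/K)$), making $\Gal(L/\Q)$ a generalized dihedral extension. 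By the dihedral characterization of ring class fields via class field theory, $L \subset H_{c'}$ for some conductor $c'$, so $\psi$ factors through $\Pic(\cO_{c'})$ and is a ring class character.

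The main obstacle is making (2) $\Rightarrow$ (1) fully rigorous in the presence of the edge case $\psi^2 = 1$, where one must verify that the anticyclotomic relation truly forces $L$ to embed inside a ring class field rather than some other dihedral-type extension. The implications (1) $\Rightarrow$ (3) and (3) $\Rightarrow$ (2) are comparatively direct computations with ray class groups and induced representations.
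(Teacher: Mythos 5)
Your implications (1)$\Rightarrow$(3) and (3)$\Rightarrow$(2) are correct and complete: the presentation of $\Pic(\cO_c)$ as the quotient of the ray class group of conductor $c\cO_K$ by the classes of rational principal ideals gives (1)$\Rightarrow$(3), and the identity $\det\Ind_\Q^K(\psi)=\chi_K\,\varepsilon_\psi$ combined with $V^\vee\cong V\otimes(\det V)^{-1}$ and $\Ind_\Q^K(\psi)\otimes\chi_K\cong\Ind_\Q^K(\psi)$ gives (3)$\Rightarrow$(2). (The paper supplies no argument of its own here -- the lemma is quoted as well known, in the spirit of the dihedral characterisation of ring class fields in Cox's book -- so there is no internal proof to compare with; your route via the transfer/determinant formula and Mackey theory is the standard one.) Your treatment of (2)$\Rightarrow$(1) is also correct in the case you call principal, $\psi'=\psi^{-1}$: there the generalized dihedral criterion does place the field cut out by $\psi$ inside a ring class field.

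The edge case you flag, however, is not a loose end that can be tightened: the implication (2)$\Rightarrow$(1) is false there, so no argument will close it. For any $\psi$ with $\psi^2=1$ one has $\Ind_\Q^K(\psi)^\vee\cong\Ind_\Q^K(\psi^{-1})=\Ind_\Q^K(\psi)$, so (2) holds automatically; but if in addition $\psi'\neq\psi$, then $\psi$ is not a ring class character (conjugation acts as inversion on $\Gal(H_{c'}/K)$ for every ring class field, so ring class characters satisfy $\psi'=\psi^{-1}$), and its central character is nontrivial, since $\psi(\fa)\psi(\overline{\fa})=\psi((\mathbf N_K\fa))$ gives $\varepsilon_\psi\circ\mathbf N_K=\psi\psi'\neq 1$. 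Such characters exist: take $K=\Q(\sqrt{-7})$ and the split prime $29=\wq\wqbar$; the ray class group of conductor $\wq$ is cyclic of order $14$, and its quadratic character $\psi$ has conductor $\wq\neq\wqbar$, so $\psi'\neq\psi=\psi^{-1}$, while $\Ind_\Q^K(\psi)$ is irreducible and self-dual. Self-duality alone only forces $(\det\Ind_\Q^K(\psi))^2=1$, i.e. $\varepsilon_\psi^2=1$, not $\varepsilon_\psi=1$. The statement becomes correct if one either assumes $\psi^2\neq 1$ (then Mackey forces $\psi'=\psi^{-1}$ and your dihedral argument finishes the proof), or strengthens (2) to ``self-dual with determinant $\chi_K$'', which is equivalent to the anticyclotomic condition $\psi'=\psi^{-1}$. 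None of this affects the rest of the paper, where $\psi$ is a ring class character by construction and only the directions (1)$\Rightarrow$(2),(3) are used; but as written neither your proof nor the lemma itself covers quadratic $\psi$ with $\psi'\neq\psi$, and you should record the corrected hypothesis rather than try to repair the argument.
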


Given a Hecke character of $K$ of infinity type $(\ka_1,\ka_2)$, we can associate to it a theta series as follows. Define the quantities $$a_n(\psi) = \sum_{\mathfrak a \in I_{\fc_\psi}^n} \psi(\mathfrak a),$$
where $I_{\fc_\psi}^n$ is the set of the ideals in $I_{\fc_\psi}$ whose norm is $n$. Define also $a_0(1) = h_K/w_K$ and $a_0(\psi) = 0$ otherwise. As shown in \cite{Kani1},
\begin{equation} \label{theta-definition}
	\theta_\psi := \sum_{n\geq 0} a_n(\psi) q^{n} = \sum_{n\geq 0} a_n(\theta_\psi) q^n \in M_{\ka_1+\ka_2}(D_K N^K_\Q(\fc_\psi), \chi_K \varepsilon_\psi)
\end{equation}
is the $q$-expansion of a normalized newform of weight $\ka_1+ \ka_2$, level $D_K N^K_\Q(\fc_\psi)$ and nebentype $\chi_K \varepsilon_\psi$. Moreover $\theta_\psi$ is Eisenstein if and only if $\psi = \psi'$; otherwise $\theta_\psi$ is a cusp form. 

Associated to $\psi$ and $\theta_\psi$ there are the Hecke $L$-functions 
\begin{align*}
L(\psi,s ) := \prod_{\mathfrak p} \left( 1-\frac{\psi(\mathfrak p )}{N^K_\Q \mathfrak p ^s} \right)^ {-1} \qquad \text{and} \qquad 
L(\theta_\psi, s):=  \sum_{n\geq 1} \frac{a_n(\theta_\psi)}{n^s}.
\end{align*}

They can be extended to a meromorphic functions on $\C$. Since they coincide in a common region of convergence, they are actually the same function. From the definitions it is easy to verify that for any $k\in \Z$ we have
\begin{equation} \label{translation}
	L(\psi, s) = L(\psi \mathbf N_K^{k}, s+k).
\end{equation}

\subsection{Katz's two-variable $p$-adic $L$-function}\label{Katzs}

Assume $D_K\geq 7$ and let $\fc \subseteq \cO_K$ be an integral ideal. Fix a prime $p=\wp \bar\wp$ that splits in $K$. 

Denote by $\Sigma$ the set of Hecke characters of $K$ of conductor dividing $\fc$ and define $$\Sigma_K = \Sigma_K^{(1)}\cup \Sigma_K^{(2)}\subset \Sigma$$ to be the disjoint union of the sets
$$
\Sigma_K^{(1)}=\{ \psi \in \Sigma \mbox{ of infinity type } (\ka_1,\ka_2), \ka_1\leq 0, \ka_2\geq 1\},
$$
$$
\Sigma_K^{(2)}=\{ \psi \in \Sigma \mbox{ of infinity type } (\ka_1,\ka_2), \ka_1\geq 1, \ka_2\leq 0\}.
$$

For all $\psi \in \Sigma_K$, $s=0$ is a critical point for the Hecke $L$-function $L(\psi^{-1},s)$, and Katz's $p$-adic $L$-function is constructed by interpolating the (suitably normalized) values $L(\psi^{-1},0)$ as $\psi$ ranges over $\Sigma_K^{(2)}$. 

More precisely, let $\hat{\Sigma}_K$ denote the completion of $\Sigma_K^{(2)}$ with respect to the compact open topology on the space of functions on a certain subset of $\mathbf{A}^\times_K$, as described in \cite[\S 5.2]{BDP1}. By the work of Katz \cite{Katz1}, there exists a $p$-adic analytic function
$$
L_p(K): \hat{\Sigma}_K\lra \C_p
$$
which is uniquely characterized by the following interpolation property: for all $\psi \in \Sigma_K^{(2)}$ of infinity
type $(\ka_1,\ka_2)$,
\begin{equation}\label{int-Katz}
L_p(K)(\psi) = \mathfrak{e}_{K}(\psi) \mathfrak{f}_{K}(\psi) \frac{\Omega_p^{\ka_1-\ka_2}}{\Omega^{\ka_1-\ka_2}}L_\fc (\psi^{-1},0)
\end{equation}
where
\begin{itemize}
\item $L_\fc (\psi^{-1},s)$ is Hecke's $L$-function associated to $\psi^{-1}$ with the Euler factors at primes dividing $\fc$ removed,

\item $\Omega_p\in \C_p^\times$ is a $p$-adic period attached to $K$, as defined in \cite[(140)]{BDP1}, \cite[(25)]{BDP2},

\item $\Omega\in \C^\times$ is the complex period associated to $K$ as defined in \cite[(137)]{BDP1},

\item $\mathfrak{e}_{K}(\psi) = (1-\frac{\psi(\wp)}{p}) (1-\psi^{-1}(\bar\wp))$, and  $\mathfrak{f}_{K}(\psi) = \frac{(\ka_1-1)!\cdot D_K^{\ka_2/2}}{(2 \pi)^{\ka_2} }$.

\end{itemize}





The following result is commonly known as Katz's Kronecker $p$-adic limit formula. It computes the value of $L_p(K)$ at a finite order character $\psi$ of $G_K$, which lies outside the region of interpolation (cf.\,\cite[\S 10.4, 10.5]{Katz1}, \cite[p.\,90]{Gr}, \cite[Ch.\,II, \S 5.2]{dS}): 
\begin{equation}\label{KatzKronecker}
L_p(K)(\psi) = \mathfrak f_p(\psi) \cdot \log_p(u_{\psi^{-1}}),
\end{equation}
where
\begin{equation}\label{Katz-GZ}
\mathfrak f_p(\psi) 
= \begin{cases}\frac{1}{2} (\frac{1}{p}-1)  & \mbox{ if } \psi=1 \\
 \frac{-1}{24c}  (1-\psi(\bar\wp)) (1-\frac{\psi(\bar\wp)}{p}) &  \mbox{ if } \psi\ne 1.
\end{cases}
\end{equation}
Here $c>0$ is the smallest positive integer in the conductor ideal of $\psi$.

\section{Classical and $p$-adic Rankin $L$-fuctions}


\subsection{Eisenstein series}
Let $\chi : (\Z/N_\chi\Z)^\times \to \C$ be a Dirichlet character of conductor $N_\chi$ and let $\Q_\chi$ denote the finite extension of $\Q$ generated by the values of $\chi$. For any multiple $N$ of $N_\chi$ let $\chi_N$  denote the character mod $N$ induced by $\chi$. 

For every positive integer $k\geq 1$, let $M_k(N,\chi_N)$ and $S_k(N,\chi_N)$ denote the spaces of holomorphic (resp.\,cuspidal) modular forms of weight $k$, level $N$ and character $\chi_N$. 

We also let $M^{\an}_k(N,\chi_N)$ and $S^{\an}_k(N,\chi_N)$ denote the space of real-analytic functions on the upper-half plane with the same transformation properties
under $\Gamma_0(N)$ and having bounded growth (resp.\,rapid decay) at the cusps. On these spaces one may define the Shimura-Maass derivative operator
$$ \delta_k := \frac{1}{2\pi i}
\left( \frac{d}{dz}  + \frac{i k}{2y}\right): M^{\an}_k(N,\chi_N) \lra M^{\an}_{k+2}(N,\chi_N).$$

For every $k\geq 1$ such that $\chi(-1)=(-1)^k$, define the {\em non-holomorphic Eisenstein series} of weight $k$ and level $N$ attached to the character $\chi_N$ as the function on $\mathcal H \times \C$ given by the rule
\begin{equation}
	\tilde{E}_{k, \chiNN}(z,s) = \sum_{(m,n)\in \Z^2\setminus\{(0,0)\}} \frac{\chi_N^{-1}(n)}{(mNz+n)^k} \cdot \frac{y^s}{|mNz+n|^{2s}}.
\end{equation}

Although a priori this series only converges for $\Re(s) > 1 -k/2$, it can be extended to a meromorphic function in the variable $s$ on the whole complex plane $\C$. For $k>2$, or $k \geq 1$ but $\chi \ne 1$,  the series arising by setting $s=0$ is actually holomorphic in $z$ and gives rise to a modular form
\[
	\tilde{E}_{k, \chiNN}(z) := \tilde{E}_{k, \chiNN}(z,0) \in M_k(N, \chi).
\]

For any value of $s$, the series $\tilde{E}_{k, \chiNN}(z,s)$ belongs to $M_k^{\an}(N, \chi)$ and
one verifies that 
\[
	\delta_k \tilde{E}_{k,\chiNN}(z,s) = -\frac{s+k}{4\pi} \tilde{E}_{k+2,\chiNN}(z,s-1).
\]
Moreover, if we let $\delta_k^t = \delta_{k+2t-2} \cdots \delta_{k+2} \delta_k$ denote the $t$-fold iterate of the Shimura-Maass operator, then for all $t\leq (k-1)/2$ we have
\begin{equation}\label{ShimuraMaass}
	\tilde{E}_{k,\chiNN}(z, -t) = \frac{(k-2t-1)!}{(k-t-1)!}(-4\pi)^t \delta_{k-2t}^t \tilde{E}_{k-2t,\chiNN}(z).
\end{equation}

Define a normalization $E_{k,\chiNN}\in M_k(N, \chi)$ of the Eisenstein series as
\begin{equation} \label{trueEisenstein}
	E_{k,\chiNN}(z) = \frac{N^k (k-1)! }{2 (-2\pi i)^k \tau(\chi^{-1})}\cdot \tilde{E}_{k,\chiNN}(z)
\end{equation}
where 
$$
\tau(\chi) = \sum_{a=1}^{N_\chi} \chi(a) e^{\frac{2\pi a i}{N_\chi}}
$$
is the Gauss sum associated to the Dirichlet character $\chi$.

Let  $\sigma_{k-1,\chi}$ denote the function on the positive integers defined as $\sigma_{k-1,\chi}(n) := \sum_{d\mid n} \chi(d)d^{k-1}$. Then $E_{k,\chi}$ is a newform of level $N_\chi$ and its $q$-expansion is
\begin{equation}
	E_{k,\chi}(q) = \frac{L(\chi, 1-k)}{2}+\sum_{n=1}^\infty \sigma_{k-1, \chi}(n) q^n \in M_k(N_\chi,\chi), \qquad q=e^{2\pi i z}.
\end{equation}

When $N>N_\chi$, $E_{k,\chi_N}(q)$ is a $\Q_\chi$-linear combination of the modular forms $E_{k,\chi}(q^d)$ as $d$ ranges over the positive divisors of $N/N_\chi$ (cf.\,\cite[(3.3), (3.4)]{Sh76} for the precise expression). In particular $E_{k,\chi_N}$ is an eigenform with respect to all good Hecke operators $T_{\ell}$, $\ell\nmid N$ with the same eigenvalues of $E_{k,\chi}$.

\subsection{Classical Rankin's $L$-function}

Recall that the Petersson scalar product on the space of real-analytic modular forms $S_l^{\an}(N,\chi) \times M_l^{\an}(N,\chi)$ is given by:
\begin{equation} \label{Peterssondef}
	\langle f_1, f_2 \rangle_{l,N} := \int_{\Gamma_0(N) \backslash \mathcal H} y^l \overline{f_1(z)} f_2(z) \frac{dx dy}{y^2}.
\end{equation}
Let $$g_l = \sum_{n\geq 1} a_n(g_l)q^n \in S_l(N, \chi_g), \quad f_k=\sum_{n\geq 1} a_n(f_k)q^n \in M_k(N, \chi_f)$$ be two eigenforms of weights $l > k \geq 1$ and nebentype characters $\chi_g$ and $\chi_f$ respectively. We do not assume $g_l$ and $f_k$ to be newforms, but we do assume them to be eigenvectors for all good and bad Hecke operators.

Set $\chi := (\chi_g \chi_f)^{-1}$ and let $g_l^*  = \sum_{n\geq 1} \bar{a}_n(g_l)q^n \in S_l(N, \chi_g^{-1})$ denote the modular form whose fourier coefficients are the complex conjugates of those of $g_l$. 

For a rational prime $q$ we let $(\alpha_q(g_l),\beta_q(g_l))$ denote the pair of roots of the Hecke polynomial $X^2-a_q(g_l)X+\chi_{g,N}(q)q^{l-1}$, that we label in such a way that $\ord_q(\alpha_q(g_l)) \leq \ord_q(\beta_q(g_l))$. Note that $(\alpha_q(g_l),\beta_q(g_l)) = (a_q(g_l),0)$ when $q\mid N$. If the weight is $l=1$ and $q\nmid N$ then both $\alpha_q(g_l)$ and $\beta_q(g_l)$ are $q$-units; in that case we just choose an arbitrary ordering of this pair. Adopt similar notations for $f_k$. 

Define the {\em Rankin $L$-function} of the convolution of $g_l$ and $f_k$ as the Euler product 
\begin{equation}\label{EulHR}
L(g_l\otimes f_k,s) = \prod_q L^{(q)}(g_l\otimes f_k,s),
\end{equation}
where $q$ ranges over all prime numbers and
\begin{align*}
L^{(q)}(g_l\otimes f_k,s) =	 &(1-\alpha_q(g_l)\alpha_q(f_k) q^{-s})^{-1} (1-\alpha_q(g_l)\beta_q(f_k) q^{-s})^{-1} \\
						&\times (1-\beta_q(g_l)\alpha_q(f_k) q^{-s})^{-1} (1-\beta_q(g_l)\beta_q(f_k) q^{-s})^{-1}.
\end{align*}
\begin{proposition}[Shimura] For all $s\in \C$ with $\Re(s) >>0$ we have:
\begin{equation} \label{Shimura}
	L(g_l\otimes f_k,s) = \frac{1}{2} \frac{(4\pi)^s}{\Gamma(s)} \langle g_l^*(z), \tilde{E}_{l-k, \chiNN}(z,s-l+1) \cdot f_k(z) \rangle_{l,N}
\end{equation}
\end{proposition}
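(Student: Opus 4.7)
The plan is to prove this via the classical Rankin--Selberg unfolding trick, then identify the resulting Dirichlet series with the Euler product \eqref{EulHR}.

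First, I would rewrite the non-holomorphic Eisenstein series $\tilde E_{l-k,\chi_N}(z,s-l+1)$ as a sum over cosets $\Gamma_\infty \backslash \Gamma_0(N)$. More precisely, the lattice sum defining $\tilde E_{l-k,\chi_N}$ can be reorganized, after isolating the $\mathrm{GCD}$, as a Dirichlet $L$-series times a Poincaré-type series of the form $\sum_{\gamma \in \Gamma_\infty \backslash \Gamma_0(N)} \chi_N^{-1}(d_\gamma)\,(j(\gamma,z))^{-(l-k)} \,\mathrm{Im}(\gamma z)^{s-l+1+(l-k)/2}$. Inserting this into the Petersson product \eqref{Peterssondef} and using the $\Gamma_0(N)$-invariance of $\overline{g_l^*(z)}\, f_k(z)\, y^l\, \frac{dx\,dy}{y^2}$, I unfold the sum against the fundamental domain to reduce the integral to one over the strip $\{0 \leq x < 1,\; y>0\}$, at the cost of removing the summation.

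Second, I would substitute the $q$-expansions of $g_l^*$ and $f_k$ into this unfolded integral. The $x$-integral over $[0,1]$ annihilates all terms except those where the Fourier frequencies coincide, producing a Dirichlet series $\sum_{n\geq 1}\bar a_n(g_l)\, a_n(f_k)\, n^{-(s+k-1)}$ up to normalisation, while the $y$-integral yields a $\Gamma$-factor via Euler's integral representation, producing exactly the $(4\pi)^{-s}\Gamma(s)$ on the left of \eqref{Shimura}. This is the content of Rankin's original computation; in the level $N$ setting it is carried out e.g. in Shimura's paper on special values.

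Third, I would invoke the Euler product identity
\[
\sum_{n\geq 1} \bar a_n(g_l) a_n(f_k)\, n^{-s} \;\cdot\; L_N(\chi, 2s-k-l+2) \;=\; L(g_l \otimes f_k, s),
\]
where the Dirichlet $L$-factor $L_N(\chi,\cdot)$ is precisely what appears from the normalisation of $\tilde E_{l-k,\chi_N}$ once the lattice sum is reorganized in the first step. Matching the factor against the scalar multiple pulled out in step one then produces the identity \eqref{Shimura} on the nose. The main obstacle is the bookkeeping at the primes dividing $N$: the convention in \eqref{EulHR} declares $(\alpha_q(g_l),\beta_q(g_l)) = (a_q(g_l),0)$ at bad primes $q\mid N$, and one must verify that the local Euler factors emerging from the unfolding really equal those in \eqref{EulHR} rather than the ``new-form'' Euler factors. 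This is where the hypothesis that $g_l$ and $f_k$ be eigenvectors for \emph{all} Hecke operators (good and bad) is essential, since only then do the local Euler factors from the multiplicative expansion of $\bar a_n(g_l)a_n(f_k)$ at $q\mid N$ collapse to $(1-\alpha_q(g_l)\alpha_q(f_k)q^{-s})^{-1}$.
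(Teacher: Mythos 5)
The paper does not prove this proposition at all: it is quoted as Shimura's classical identity (the attribution ``[Shimura]'' refers to the computation in \cite{Sh76}), so there is no internal proof to compare against, and your unfolding sketch is precisely the classical argument that citation stands for. Its structure is correct and complete in outline: isolating the gcd in the lattice sum writes $\tilde{E}_{l-k,\chi_N}(z,s-l+1)$ as an abelian $L$-factor times a sum over $\Gamma_\infty\backslash\Gamma_0(N)$ (note that identifying $\pm(m,n)$ with a single coset produces an overall factor $2$, which is exactly the source of the $\tfrac12$ in \eqref{Shimura} --- worth stating explicitly); unfolding against \eqref{Peterssondef}, integrating in $x$ and then in $y$ at the evaluation point $s-l+1$ gives $(4\pi)^{-s}\Gamma(s)$ times a convolution Dirichlet series with exponent $n^{-s}$ on the nose; and your final remark is exactly right that the hypothesis that $g_l$ and $f_k$ are eigenvectors for the \emph{bad} Hecke operators is what makes this series, multiplied by the auxiliary $L$-factor with Euler factors at $q\mid N$ removed, equal the Euler product \eqref{EulHR} with the convention $\beta_q=0$ at $q\mid N$. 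Two bookkeeping slips should be repaired, though they do not affect the structure: since the Petersson product conjugates its first argument and that argument is already $g_l^*$, the series produced by the $x$-integration is $\sum_{n\ge1} a_n(g_l)a_n(f_k)n^{-s}$, not $\sum_{n\ge1}\bar a_n(g_l)a_n(f_k)n^{-s}$; correspondingly, the $L$-factor pulled out of the Eisenstein series (whose definition carries $\chi_N^{-1}(n)$) is $L_N(\chi^{-1},2s-l-k+2)=L_N(\chi_g\chi_f,2s-l-k+2)$, not $L_N(\chi,\cdot)$. Your two slips are consistent with one another (together they would compute $L(g_l^*\otimes f_k,s)$ instead), and in the paper's application $\chi_f=1$ and $\chi_g=\chi_K$ is quadratic so the distinction evaporates, but for the proposition as stated the corrected version is needed to land on $L(g_l\otimes f_k,s)$.
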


Choose integers $m,t$ such that $$l = k+m+2t \quad \mbox{ and set } \quad j = (l+k+m-2)/2 = l-t-1.$$ For $m\geq 1$ and $t\geq 0$, evaluating equation \eqref{Shimura} at $s=j$ and using equations \eqref{ShimuraMaass} and \eqref{trueEisenstein}  one finds that
\begin{equation} \label{rankinmethod}
	\fhr(l,k,m) \cdot L(g_l\otimes f_k, j) = \langle g_l^*(z), \delta_m^t E_{m,\chiNN}(z) \cdot f_k(z) \rangle_{l,N},
\end{equation}
where
\begin{equation}
	\fhr(l,k,m) = \frac{(-1)^t (m+t-1)! (j-1)! (iN)^m }{2^{l-1}(2\pi)^{l+m-1}\cdot \tau(\chi^{-1})}.
\end{equation}

\subsection{Critical values, algebraicity and the Hida-Rankin $p$-adic $L$-function}\label{secHida}

Since we are assuming $l>k\geq 1$, an integer $j$ is critical for $L(g_l\otimes f_k,s)$ if and only if $j\in [k, l-1]$. We shall restrict our attention to critical integers in the range
\[
	j \in \Bigl[\frac{l+k-1}{2}, l-1 \Bigr],
\]
and for a given such $j$ we set $$t:= l-j-1 \quad \mbox{and} \quad m:= l-k-2t.$$ From equation \eqref{rankinmethod} it follows that
\begin{equation} \label{rankinmethod2}
	\fhr(l,k,m) \cdot L(g_l\otimes f_k,j) = \langle g_l^*(z), \delta_m^t E_{m,\chiNN}(z) \cdot f_k(z) \rangle_{l,N}.
\end{equation}
Define the algebraic part of $L(g_l\otimes f_k, j)$ as in \cite[(9)]{BDR1}:
\begin{equation} \label{algebraicpart}
	L^{\alg}(g_l\otimes f_k,j) := \fhr(l,k,j)\frac{L(g_l\otimes f_k, j)}{\langle g_l^*, g_l^* \rangle_{l,N}} =\frac{\langle g_l^*(z), \delta_m^t E_{m,\chiNN}(z) \cdot f_k(z) \rangle_{l,N}}{\langle g_l^*, g_l^* \rangle_{l,N}}.
\end{equation}

Fix a prime $p\nmid N$ at which $g_l$ is ordinary and let $g_{l,\alpha} \in S_l(Np, \chi_g)$ denote the ordinary $p$-stabilisation of $g_l$ on which $U_p$ acts with eigenvalue $\alpha_p(g_l)$.

Let $\hg$ be a Hida family of ordinary overconvergent modular forms of tame (but not necessarily primitive) level $N$, passing through $g_{l,\alpha}$. The Hida family is parametrized by a finite \'etale rigid-analytic cover $U_\hg$ of weight space $\cW$. By shrinking $U_\hg$ if necessary, we assume that $U_\hg(\Z_p)$ is fibered over a single residue class modulo $p-1$ of $\cW(\Z_p) = \Z_p^\times \simeq \Z/(p-1)\Z \times \Z_p$. By a slight abuse of notation which shall be harmless for our purposes, we identify throughout points in $U_\hg$ with their image in $\cW$ under the weight map.


With these conventions, for every classical weight $l \in U_\hg\cap \Z^{\geq 2}$ we let
\begin{equation}\label{gl}
	g_l\in S_l(N,\chi_g)
\end{equation}
denote the classical cusp form whose ordinary $p$-stabilisation is the specialisation of $\hg$ at an arithmetic point in $U_{\hg}$ of weight $l$.


Define $$\ehr(l,k,j) := \frac{\cE(g_l,f_k,j)}{\cE_1(g_l)\cE_0(g_l)}$$ where:
\begin{eqnarray*}
	\cE(g_l,f_k,j)&=&
	(1-\beta_p(g_l)\alpha_p(f_k) p^{t-l+1})(1-\beta_p(g_l)\beta_p(f_k) p^{t-l+1}) \\
	&\times& (1-\beta_p(g_l)\alpha_p(f_k) \chi(p) p^{t-l+1})(1-\beta_p(g_l)\beta_p(f_k) \chi(p) p^{t-l+1}),\\
	\cE_1(g_l)&=&1-\beta_p(g_l)^2 p^{-l}, \\
	\cE_0(g_l)&=&1-\beta_p(g_l)^2 p^{1-l}.
\end{eqnarray*}

In \cite[\S 7.4]{hida-book} Hida constructed a three-variable $p$-adic $L$-function interpolating central critical values of the Rankin $L$-function associated to the convolution of two Hida families of modular forms. For the purposes of this note it will suffice to retain the restriction of this $p$-adic $L$-function to the one-dimensional domain afforded by $U_\hg$. Here we will work with the notations and normalisations adopted in \cite{BDR1}. In order to introduce this $p$-adic $L$-function properly, we shall make use of the following operators on the space of overconvergent $p$-adic modular forms, that we introduce here by describing their action on $q$-expansions:

\begin{itemize}

\item Serre's derivative operator $d =q\cdot \frac{d}{dq}$, which may be regarded as the $p$-adic avatar of the Shimura-Maass operator invoked above.

\item The $U$ and $V$ operators acting on a modular form $\phi=\sum a_n q^n$ by the rules $$U(\phi) = \sum a_{pn}q^n \quad \mbox{and} \quad V(\phi) = \sum a_n q^{pn}.$$

\item Hida's ordinary idempotent  $e_{\ord} := \lim U_p^{n!}$.

\item $p$-depleting operator: $\phi^{[p]} := (1- UV)(\phi) = \sum_{p\nmid n} a_n q^n$. 

\end{itemize}

Let $\cE$ denote the Kuga-Sato variety fibered over $X_1(N)$ and let $\cE^{l-2}$ denote the fiber product of $l-2$ copies of $\cE$ over $X_1(N)$. The dimension of $\cE^{l-2}$ is $l-1$ and the middle de Rham cohomology group $H^{l-1}_{\dR}(\cE^{l-2}/\C_p)$ contains the canonical regular differential form $\omega_{g^*_l}$ associated to $g^*_l$. Let  
$$
\langle\ ,\ \rangle: H^{l-1}_{\dR}(\cE^{l-2}/\C_p) \times H^{l-1}_{\dR}(\cE^{l-2}/\C_p) \, \lra \, \C_p
$$ 
denote the non-degenerate Poincar\'e pairing on $H^{l-1}_{\dR}(\cE^{l-2}/\C_p)$.

The $g^*_l$-isotypic component of
$H^{l-1}_{\dR}(\cE^{l-2}/\C_p)$ is two-dimensional over $\C_p$ and it admits a one-dimensional 
{\em unit root subspace}, denoted $H^{l-1}_{\dR}(\cE^{l-2}/\C_p)^{\ur}$,
on which the Frobenius endomorphism acts as multiplication by a $p$-adic unit.
This unit root subspace is complementary to the line spanned by $\omega_{g^*_l}$. There is thus a single class
$\eta_{g^*_l}^{\ur} \in H^{l-1}_{\dR}(\cE^{l-2}/\C_p)^{\ur}$ satisfying
$$ \langle \omega_{g^*_l},\eta_{g^*_l}^{\ur} \rangle = 1.$$


Let $\Lambda_g$ denote the algebra of Iwasawa functions on $U_{\hg}$ and let $\mathcal K_{\hg}$ denote the fraction field of $\Lambda_{\hg}$.  As shown in \cite{BDR1}, there exists a unique $p$-adic $L$-function $L_p(\hg,f) \in \mathcal K_{\hg}$ satisfying
\begin{equation} \label{definhida}
L_p(\hg,f)(l) = \frac{1}{\cE_0(g_l,f_k,j)}\langle \eta^{ur}_{g^*_l}, e_{\ord}(d^t E_{m,\chiNN}^{[p]} \cdot f_k) \rangle
\end{equation}
for all $l\in U_{\hg} \cap \Z_{\geq 2}$. Note that $d^t E_{m,\chiNN}^{[p]} \cdot f_k$ is an overconvergent modular form of weight $l\geq 2$ and hence its ordinary projection is classical by a celebrated theorem of Hida and Coleman. This way $e_{\ord}(d^t E_{m,\chiNN}^{[p]} \cdot f_k)$ gives rise to a regular differential form  in $H^{l-1}_{\dR}(\cE^{l-2}/\C_p)$, which may therefore be written as 
$$
e_{\ord}(d^t E_{m,\chiNN}^{[p]} \cdot f_k) = C(g_l,f_k)\cdot \omega_{g_l^*} + (\mbox{Other terms with respect to an orthogonal basis})
$$

In plane terms, the above formula might be read as
$$
L_p(\hg,f)(l) = \frac{C(g_l,f_k)}{\cE_0(g_l,f_k,j)}.
$$

The $p$-adic $L$-function $L_p(\hg,f_k)$ deserves its name because it obeys and it is characterized by the following interpolation formula that relates the values of $L_p(\hg,f_k)$ at integers $l>k$ to critical values of a Rankin $L$-function. In fact, it follows from \eqref{algebraicpart} and \eqref{definhida} that for every $l\in U_{\hg} \cap \Z_{l>k}$ we have
\begin{equation} \label{interpolhida}
	L_p(\hg,f_k)(l) = \ehr(l,k,j) \cdot L^{\alg}(g_l\otimes f_k,j) = \ehr(l,k,j) \fhr(l,k,j)\frac{L(g_l\otimes f_k, j)}{\langle g_l^*, g_l^* \rangle_{l,N}}. 
\end{equation}
We call this function the {\em Hida-Rankin $p$-adic $L$-function} associated to $\hg$ and $f_k$.

Note  that $l=1$ lies outside the above region of interpolation. Assume that there exists an eigenform $g_1\in M_1(N_g,\chi_g)$  such that the ordinary $p$-stabilisation $g_{1,\alpha}$ arises as the specialisation of $\hg$ at an arithmetic point of weight $1$ in $U_{\hg}$. Notice that this is not always the case, as a Hida family may in general specialize to non-classical overconvergent modular forms at points of weight one.

The following result provides a formula for the value of $L_p(\hg,f)$ at $l=1$ and was proved in \cite[\S 2]{DLR}. 
Assume that $k=2$ and $\chi_f=1$, so that $\chi=\chi_g^{-1}$, and set $f=f_2$. Recall the $p$-adic iterated integrals introduced in \eqref{iterated}.

\begin{proposition} \label{vitasalva}
 Let $h:=E_{1, \chiNN} \in M_1(N,\chi_N)$. Then $L_p(\hg,f)$ has no pole at $l=1$ and
\begin{equation} 
	L_p(\hg,f)(1) = \int_{\gamma_{g_1}} f \cdot h.
\end{equation}
\end{proposition}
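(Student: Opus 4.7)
The plan is to extend the defining formula \eqref{definhida} for $L_p(\hg,f)$ rigid analytically from $l \geq 2$ to a neighborhood of $l=1$ in $U_\hg$, and then to identify its value at $l=1$ with the iterated integral. With $k=2$ fixed, I would parametrize the formula by taking $j = l-1$, $t = l-2$, $m=1$, so that the Eisenstein factor is always $E_{1,\chiNN}$ and the derivative operator is $d^{l-2}$, which at $l=1$ formally becomes $d^{-1}$.

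For the analytic extension, I would verify that each of the ingredients in \eqref{definhida} varies rigid analytically on $U_\hg$: the Eisenstein family $d^{l-2}(E_{1,\chiNN}^{[p]})$ by Serre's theory of $p$-adic Eisenstein families (the operator $d^t$ on $p$-depleted forms interpolates rigid analytically in $t$); Hida's ordinary projector $e_{\ord}$ and the unit-root pairing $\langle \eta^{\ur}_{g_l^*},-\rangle$ by their universal descriptions along the Hida family $\hg$; and the Euler factor in the denominator, whose non-vanishing (or compensating vanishing) at $l=1$ gives the absence of a pole. Under hypothesis C-C' one has $\alpha\neq\beta$, which in particular forces $\cE_0(g_1) = 1-\beta_p(g_1)^2 p^{0}$ either to be nonzero or, in its degenerate subcase, to be cancelled by a zero of the numerator, yielding the no-pole statement.

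Specialization at $l=1$ produces an expression of the shape $\cE_0(g_1)^{-1}\langle \eta^{\ur}_{g_1^*}, e_{\ord}(d^{-1}(E_{1,\chiNN}^{[p]})\cdot f)\rangle$, whereas the iterated integral involves $e_{\ord}(d^{-1}(f^{[p]})\cdot E_{1,\chiNN})$ (with $d^{-1}$ and $[p]$ applied to the other factor). To reconcile the two I would use the key identity $e_{\ord}\circ d = 0$, a consequence of the commutation $U_p d = p\cdot d U_p$. Applying this to the Leibniz rule for $d$ on the product $d^{-1}(E^{[p]})\cdot d^{-1}(f^{[p]})$ yields $e_{\ord}(d^{-1}(E^{[p]})\cdot f^{[p]}) = -e_{\ord}(E^{[p]}\cdot d^{-1}(f^{[p]}))$; accounting for the non-depleted pieces via $f=f^{[p]}+VUf$ and $E=E^{[p]}+VUE$ introduces correction terms controlled by the $U_p$-eigenvalues $\alpha_p(g_1),\beta_p(g_1),\alpha_p(f),\beta_p(f)$. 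These combine into precisely the Euler factor $\cE_0(g_1)$, which cancels the denominator. At this point the weight-one specialization of $\langle \eta^{\ur}_{g_l^*},-\rangle$ must be identified, up to the cancelled Euler factor, with the functional $\gamma_{g_\alpha}\circ e_{g_\alpha}^*$ appearing in \eqref{iterated}; this identification is legitimate precisely because hypothesis C-C', via the Bellaïche--Dimitrov \'etaleness \cite{BeDi}, guarantees that the weight-one overconvergent generalized eigenspace at $g_\alpha$ is classical, so $e_{g_\alpha}^*$ is well-defined and $\gamma_{g_\alpha}$ is characterized, as recorded in \cite[Proposition 2.6]{DLR}, as the limit of the unit-root pairings along $\hg$.

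The main obstacle is bridging the gap between the weight-$\geq 2$ de Rham framework used to define $L_p(\hg,f)$ and the weight-one setting, where no intrinsic Hodge filtration or unit-root subspace exists. Both the operator swap via $p$-adic integration by parts (with its careful Euler-factor bookkeeping) and the identification of $\gamma_{g_\alpha}$ as the limiting test vector along $\hg$ rely crucially on the classicality hypotheses C-C'; once these are in force, the two sides agree and the proposition follows.
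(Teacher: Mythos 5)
Your overall architecture is the right one --- the paper itself proves this proposition by simply combining \cite[Remark 4.5]{DR1}, \cite[Proposition 4.6]{DR1} and \cite[Proposition 2.6]{DLR}, and your plan is in effect an attempt to reprove the content of those citations: interpolate the defining expression \eqref{definhida} along $\hg$, swap which factor carries $d^{-1}$ and the $p$-depletion via $e_{\ord}\circ d=0$, and identify the weight-one avatar of $\langle \eta^{\ur}_{g_l^*},-\rangle$ with $\testgamma_{g_\alpha}\circ e^*_{g_\alpha}$ using C--C$'$ and \cite{BeDi}. But as written there are genuine gaps. First, your parametrization is inconsistent: the constraint in \eqref{definhida}--\eqref{interpolhida} is $l=k+m+2t$ with $j=l-t-1$, so with $k=2$ and $m=1$ fixed one must take $t=(l-3)/2$ and $j=(l+1)/2$ along odd weights (this is exactly why the paper works with weights $2l+3$, $t=l$, $j=l+2$); your choice $j=l-1$, $t=l-2$, $m=1$ satisfies the constraint only at $l=1$, so the family of classical points you propose to interpolate from does not lie in the interpolation region. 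Second, and more seriously, the crux of the argument --- that the minus sign from integration by parts together with the correction terms coming from $f=f^{[p]}+VUf$ and $E=E^{[p]}+VUE$ assembles into \emph{exactly} the factor $\cE_0(g_1)$, with no residual sign or extra Euler-type factors (such as $\cE_1$-type terms), so that the denominator in \eqref{definhida} cancels and \eqref{iterated} appears on the nose --- is asserted rather than computed. That bookkeeping is precisely the nontrivial content of the results the paper invokes, so your proposal assumes the key identity at the point where the proof actually lives.

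Third, the no-pole statement is not established in the degenerate case. When $\psi^2=1$ (the Eisenstein case, which is allowed and indeed used in \eqref{christmas}), one has $\alpha=\beta=\pm 1$, hence $\cE_0(g_1)=1-\beta_p(g_1)^2=0$; saying it is ``either nonzero or cancelled by a zero of the numerator'' is a restatement of what must be proved, not an argument. Moreover, regularity of $L_p(\hg,f)\in\mathcal K_{\hg}$ at the weight-one point is not really governed by $\cE_0$ at all: the potential pole comes from the $\Lambda$-adic construction (the $\Lambda$-adic dual class/pairing and congruence denominators), and its absence under hypotheses C--C$'$ rests on the \'etaleness and classicality results of \cite{BeDi} and \cite[\S 1]{DLR}, which also underlie the identification of the weight-one functional with $\testgamma_{g_\alpha}$. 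To turn your sketch into a proof you would need to carry out the Euler-factor computation explicitly (in both the cuspidal and Eisenstein cases) and justify the analytic continuation of the pairing family to $l=1$, or else do what the paper does and quote \cite[Remark 4.5, Proposition 4.6]{DR1} and \cite[Proposition 2.6]{DLR}, where these two points are settled.
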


\begin{proof}
Combine \cite[remark 4.5]{DR1}, \cite[proposition 4.6]{DR1} and \cite[proposition 2.6]{DLR}.
\end{proof}




\subsection{Bertolini-Darmon-Prasanna's $p$-adic $L$-function}\label{BDPs}

As in the introduction, let $E/\Q$ be an elliptic curve of conductor $N_E$ and let $f\in S_2(N_E)$ denote the eigenform associated to it by modularity. As in \S \ref{Katzs}, let also $K$ be an imaginary quadratic field of discriminant $-D_K \leq -7$ fulfilling the Heegner hypothesis. 

Let $\fc \subseteq \cO_K$ be an integral ideal and set $N=\mathrm{lcm}(N_E,D_K N_{K/\Q}(\fc))$. 
Let $\Sigma$ denote the set of Hecke characters of $K$ of conductor dividing $\fc$. For any Hecke character $\psi\in \Sigma$ of infinity type $(\ka_1,\ka_2)$, let $L(f,\psi,s)$ denote the $L$-function associated to the compatible system of Galois representations afforded by the tensor product  $\varrho_{f|G_K} \otimes \psi$ of the (restriction to $G_K$ of) the Galois representations attached to $f$ and the character $\psi$.


As usual, $L(f,\psi,s) = \prod_q L^{(q)}(q^{-s})$ is defined as a product of Euler factors ranging over the set of prime numbers. The Euler factors at the primes $q$ such that $q\nmid N$ are exactly the same as that of the Rankin L-series $L(\theta_\psi \otimes f,s)$ introduced above, but may differ at the primes $q$ such that $q\mid N$ (details can be found in \cite{Gro} for $f$ modular form of weight 2).

Let $\Sigma_{f,K}\subset \Sigma$ be the subset of Hecke characters of trivial central character in $\Sigma$ for which $L(f, \psi^{-1},s)$ is self-dual and $s=0$ is its central critical point. This set is naturally the disjoint union of the three subsets 
$$
\Sigma^{(1)}_{f,K} = \{ \psi \in \Sigma_{f,K} \,\mbox{ of infinity type }\,(1,1)\},
$$
$$
\Sigma^{(2)}_{f,K} = \{ \psi \in \Sigma_{f,K} \,\mbox{ of infinity type }\,(2+\ka,-\ka), \ka\geq 0\}
$$
and
$$
\Sigma^{(2')}_{f,K} = \{ \psi \in \Sigma_{f,K} \,\mbox{ of infinity type }\,(-\ka,\ka+2), \ka\geq 0\}.
$$

Fix a prime $p=\wp \bar\wp$ that splits in $K$.  
Each of the three sets $\Sigma^{(1)}_{f,K}$, $\Sigma^{(2)}_{f,K}$ and $\Sigma^{(2')}_{f,K}$ are dense in the completion $\hat{\Sigma}_{f,K}$ of  $\Sigma_{f,K}$ with respect to the $p$-adic compact open topology as explained in \cite[\S 5.2]{BDP1}. As shown in \cite{BDP1}, there exists a unique $p$-adic analytic function
\begin{equation*}
L_p(f,K): \hat{\Sigma}_{f,K} \lra \C_p
\end{equation*}
interpolating the critical values $L(f,\psi^{-1},0)$ for $\psi \in \Sigma^{(2)}_{f,K}$, suitably normalized.

We refer to $L_p(f,K)$ as the Bertolini-Darmon-Prasanna $p$-adic Rankin $L$-function attached to the pair $(f,K)$. 


Consider a character $\Psi \in \Sigma^{(2)}_{f,K}$ of type $(2+\ka, -\ka)$, for $\ka \geq 0$. According to \cite[\S 5.2]{BDP1} the interpolation formula for $L_p(f,K)$ reads
\begin{equation}\label{int-BDP}
L_p(f,K)(\Psi) =\cE_c \cdot  \mathfrak{e}_{\mathrm{BDP}}(\Psi)   \cdot \mathfrak{f}_{\mathrm{BDP}}(\Psi)\cdot \frac{ \Omega_p^{4\ka+4}  }{\Omega^{4\ka+4}} \cdot L(f,\Psi^{-1},0),
\end{equation}
where

\begin{itemize}

\item
$\cE_c =  \prod_{q\mid c} \frac{q-\chi_K(q)}{q-1}$, 	$\qquad \mathfrak{e}_{\mathrm{BDP}}(\Psi) = (1-a_p(f) \Psi^{-1}(\bar\wp) + p\Psi^{-1}(\bar\wp) ^2 )^2$,
\item
	$\mathfrak{f}_{\mathrm{BDP}}(\Psi)  = \left(\frac{2\pi}{\sqrt{D_K}}\right)^{2\ka+1}  \ka!(\ka+1)!  \cdot 2^{\sharp q\mid (D_K,N_E)} \cdot \omega(f,\Psi)^{-1}$ 
\end{itemize}
with $\omega(f, \Psi)$ as defined in \cite[(5.1.11)]{BDP1}. 

If $\psi$ is a finite order anticyclotomic character of conductor $c\mid \mathfrak c$, then $\psi \mathbf{N}_K$ lies outside the region of interpolation and the main theorem of \cite{BDP2} asserts that
\begin{equation}\label{thm:bdp-gz}
L_p(f,K)(\psi^{-1} \mathbf{N}_K) = \mathfrak f_p(f,\psi) \times \log_{\omega_E}(P_{\psi})^2
\end{equation}
where $\mathfrak f_p(f,\psi) = (1-\psi(\bar\wp) p^{-1} a_p(f) + \psi^2(\bar\wp) p^{-1})^2$.

\section{Proof of the main theorem}

Recall the three eigenforms that have been fixed at the outset:
\[
f\in S_2(N_E), \quad	 g=\theta_\psi \in M_1(D_Kc^2, \chi_K)_{\Q_{\psi}}, \quad h=\mathrm{E}_{1,\chi_K} \in M_1(D_K,\chi_K).
\]
Choose and fix modular forms $\testf \in S_2(N)[f]$ and $\testg \in M_1(N,\chi_K)[g]$ that are eigenforms for all good and bad Hecke operators. For the sake of concreteness, we may write
\[
\testf(z) = \sum_{d \mid \frac{N}{N_E}} \mu_d(f) f(d z), \quad	\testg(z) = \sum_{d \mid \frac{N}{Dc^2}} \mu_d(g) g(d z)
\]
where $\mu_d(f)$, $\mu_d(g)$ belong to the number field $\Q_{\psi}(f_N)$ introduced in the paragraph preceding Theorem \ref{main-theta}.




Fix a prime $p\nmid N$ that splits in $K$ and choose a root $\alpha$ of $T^2-a_p(g)T+1$. As in the introduction let $\testg_\alpha\in M_1(Np,\chi_K)$ denote the $p$-stabilisation of $\testg$ on which $U_p$ acts with eigenvalue $\alpha$.

There exists a unique $p$-adic Hida family $\hg$ of theta series of tame level $D_Kc^2$ and tame character $\chi_K$ passing through $g_\alpha$. As in \S \ref{secHida} and \eqref{gl}, for every classical weight $l \in U_\hg\cap \Z^{\geq 2}$ we let
\begin{equation}
	g_l\in S_l(D_Kc^2,\chi_K)
\end{equation}
denote the classical newform whose ordinary $p$-stabilisation is the specialisation of $\hg$ at an arithmetic point in $U_{\hg}$ of weight $l$. Notice that at $l=1$ the modular form $g_l$ is still classical, by assumption, but it might not be a cusp form. In this case we have $g_1 = g\in M_1(D_kc^2, \chi_K)$.

For every such $l$ we can also explicitly describe the Hecke character $\psi_{l-1}$ of conductor $c$ and infinity type $(0,l-1)$ such that $g_l=\theta_{\psi_{\ell-1}}$. We do the same construction done in \cite[p. 235-236]{hida-book} and \cite[\S 3]{DLR}, but we slightly change conventions. Pick a $p$-adic unitary character $\lambda$ of conductor $c\bar\wp$ and infinity type $(0,1)$. Define then $\psi_{\ell-1}(\mathfrak q) := \psi(\mathfrak q) \langle \lambda(\mathfrak q) \rangle ^{\ell-1}$, then define $\psi_{\ell-1}(\bar\wp) := p^{\ell-1}/\psi_{\ell-1}(\wp)$. At any prime $q=\mathfrak q \bar{\mathfrak q}$ which is splits in $K$ we have
\begin{equation} \label{useful}
	\alpha_q(g_l) = \psi_{l-1}(\mathfrak q), \qquad \beta_q(g_l) = \psi_{l-1}(\bar{\mathfrak q}).
\end{equation}
Our running hypothesis on $p$ and the Heegner assumption imply that this is the case for $q=p$ and for any of the primes dividing $N$ but not $D_K$. 

Together with $\hg$, it will also be useful to consider  the $\Lambda$-adic family of modular forms
 $$\htg(q) = \sum_{d \mid \frac{N}{Dc^2}} \mu_d(g) \hg(q^d)$$ 
arising from our choice of $\testg$. Note that $\htg$ specializes to $\testg_\alpha$ in weight one.


Let $U_{\hg}^\circ$ denote the subset of $U_{\hg}$ consisting of classical points of weights of the form $2l+3\equiv 1 \pmod{p-1}$ with $l\in \Z_{\geq 1}$. According to the conventions about Hida families adopted in \S \ref{secHida}, note that $U_{\hg}^\circ$ is dense in $U_{\hg}$.

Set $j = l+2$, $t=l$ and $m=1$. Then the interpolation formula of \eqref{interpolhida} at points in $U_{\hg}^\circ$ reads as follows:
\begin{equation} \label{1variable}
	L_p(\htg,\testf)(2l+3,l+2) = \ehr(l) \cdot \fhr(l) \cdot \frac{L(\testg_{2l+3}\otimes \testf, l+2)}{\langle \testg_{2l+2}^*, \testg_{2l+2}^* \rangle_{l,N}} 
\end{equation}
where $\ehr(l) = \cE(2l+3,2,l+2)/(\cE_1(2l+3) \cE_0(2l+3))$, with:
 \begin{eqnarray*}
 \cE(2l+3,2,l+2)&=&
 (1-\alpha_f \beta_{\theta_{2l+3}} p^{-(l+2)})^2
 (1- \beta_f \beta_{\theta_{2l+3}} p^{-(l+2)})^2,\\
 \cE_1(2l+3)&=&1-\beta_{\theta_{2l+3}}^2 p^{-2l-3}, \\
\cE_0(2l+3)&=&1-\beta_{\theta_{2l+3}}^2 p^{-2l-2},
 \end{eqnarray*}
and
\begin{equation}\label{foo-BDR}
\mathfrak{f}_{\mathrm{HR}}(l)  = \frac{(-1)^l l!(l+1)!\cdot i \cdot N}{ 2^{4l+5} \pi^{2l+3}\cdot \tau(\chi_K)} = (-1)^l  \cdot \frac{l!(l+1)!\cdot N}{ 2^{4l+5} \pi^{2l+3}\cdot \sqrt{D_K}}
\end{equation}  
Here the last equality holds because $\tau(\chi_K) = i \sqrt{D_K}$ (cfr. \cite{Roh}).


We now need the following two basic lemmas.

\begin{lemma}\label{proposition1} There exists a meromorphic function $ \cEul_N(s)$ such that the following factorisation formula holds
\[
	L(\testg \otimes \testf, s) = \cEul_N(s) \cdot L(f, \psi, s)
\]
and $\cEul_N(1) \in \Q_\psi(f_N)^\times$.
\end{lemma}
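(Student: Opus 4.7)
The plan is to combine the Artin formalism that identifies $\theta_\psi$-twisted $L$-functions with $\psi$-twisted $L$-functions over $K$, together with a local Euler-factor analysis at the finitely many primes dividing $N$. The driving identity is the projection formula
\[
\rho_f \otimes \mathrm{Ind}_K^\Q(\psi) \;\simeq\; \mathrm{Ind}_K^\Q(\rho_{f|G_K} \otimes \psi),
\]
which, via invariance of Artin $L$-functions under induction, yields $L(\rho_f \otimes \rho_g,s) = L(f,\psi,s)$. The left-hand side agrees with the primitive Rankin convolution $L(g \otimes f,s)$ at every prime not dividing $N$, so the entire factorisation question reduces to handling the bad Euler factors for the test vectors $\testf, \testg$.

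First I would verify that for every prime $q \nmid N$ the local factor $L^{(q)}(\testg \otimes \testf,s)$ of \eqref{EulHR} depends only on the Hecke eigenvalues $a_q(\testg)=a_q(g)$ and $a_q(\testf)=a_q(f)$, hence equals the Galois-theoretic local factor of $\rho_f \otimes \rho_g$ at $q$ and therefore $L^{(q)}(f,\psi,s)$. At the finitely many primes $q \mid N$, both $L^{(q)}(\testg \otimes \testf,s)$ and $L^{(q)}(f,\psi,s)$ are explicit rational functions in $q^{-s}$ with coefficients in $\Q_{\psi}(f_N)$ and $\Q_{\psi}$ respectively. Indeed, the convention in \eqref{EulHR} forces $(\alpha_q(\testg),\beta_q(\testg))=(a_q(\testg),0)$ and similarly for $\testf$, so the numerator at a bad prime reduces to $(1-a_q(\testg)a_q(\testf)q^{-s})^{-1}$ or becomes trivial when $a_q(\testg)a_q(\testf)=0$; the denominator is a standard local factor of the twist, computable from the local Galois representation of $\rho_f$ and the local components of $\psi$ at the primes of $K$ above $q$.

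Next, setting
\[
\cEul_N(s) \;:=\; \prod_{q \mid N} \frac{L^{(q)}(\testg \otimes \testf, s)}{L^{(q)}(f,\psi,s)}
\]
produces a meromorphic function on $\C$, and multiplying all Euler factors yields the desired identity $L(\testg \otimes \testf, s) = \cEul_N(s) \cdot L(f,\psi,s)$. To conclude $\cEul_N(1) \in \Q_{\psi}(f_N)^\times$, I would invoke the Ramanujan--Deligne bound $|a_q(\testf)|\le 2\sqrt{q}$ and the unitarity $|a_q(\testg)|\le 2$, which force $|a_q(\testg)a_q(\testf)/q|<1$ and hence non-vanishing of each numerator factor at $s=1$. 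Non-vanishing of the denominator factors at $s=1$ amounts to regularity and non-vanishing of $L^{(q)}(f,\psi,s)$ there, which follows from the same weight bounds on the Satake parameters of $\rho_{f|G_K} \otimes \psi$. Algebraicity of the ratio is automatic because the relevant local parameters all lie in $\Q_{\psi}(f_N)$.

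The main obstacle will be the bookkeeping of the local Euler factors at primes $q$ that simultaneously divide $N_E$ and $D_K c^2$, where the interaction between the old-form structure of the test vectors $\testf,\testg$ and the local Galois factor of $L(f,\psi,s)$ must be disentangled case by case. In the setting of Theorem \ref{main-theta}(ii), where $N_E=D_K$ and $c=1$, this analysis simplifies considerably since there is essentially one such prime to inspect and the relevant local factors can be written down explicitly, which is precisely the input required for the sharper formula for $\lambda(f,g)$.
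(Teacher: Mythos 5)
Your argument is essentially the paper's: both proofs observe that the Euler factors of $L(\testg\otimes\testf,s)$ and $L(f,\psi,s)$ agree at every $q\nmid N$, define $\cEul_N(s)$ as the finite product of ratios of bad local factors (the paper writes it out explicitly in \eqref{grossplus} using \cite{Gro}), and then check that this ratio is finite, non-zero and lies in $\Q_\psi(f_N)$ at $s=1$. The only point to correct is your justification of non-vanishing: the bounds $|a_q(\testf)|\le 2\sqrt q$ and $|a_q(\testg)|\le 2$ do \emph{not} force $|a_q(\testg)a_q(\testf)/q|<1$ (for $q\le 16$ one only gets $4/\sqrt q\ge 1$). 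What saves the argument is that at primes $q\mid N$ the relevant quantities are $U_q$-eigenvalues of the test vectors, for which $|\alpha_q(\testf)|\le \sqrt q$ and $|\alpha_q(\testg)|\le 1$, so $|\alpha_q(\testf)\alpha_q(\testg)|\le \sqrt q<q$; likewise the inverse roots of the bad factors of $L(f,\psi,s)$ have absolute value at most $\sqrt q$ by the Weil bounds, so no factor vanishes at $s=1$ --- this is exactly what the paper's explicit formula \eqref{grossplus} makes visible by inspection.
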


\begin{proof}
	Since we choose $\testf$ and $\testg$ to be eigenforms for all Hecke operators, we can use Euler products to compare the two L-functions. Their Euler factors are equal outside primes $q\mid N$. Then the factor $\cEul_N(s)$ is the product of the bad Euler factors at $q\mid N$ which encode this discrepancy. If we use the definitions of equation \eqref{EulHR} and of \cite[equation (20.2)]{Gro} we find:
	\begin{equation} \label{grossplus}
		\cEul_N(s) = \frac{\prod_{q\mid (N_E,D_K)} (1+q^{-s}) \prod_{q||N_E, q\nmid D_K} (1-a_q(f)q^{-s})^2 \prod_{q\mid D_K, q \nmid N_E} (1-a_q(f)a_q(g)q^{-s}+q^{1-2s} )}{\prod_{q\mid N} (1-\alpha_q(\testf)\alpha_q(\testg)q^{-s})}
	\end{equation}
	so that $\cEul_N(1)$ is a finite product of non-zero terms which lies in $\Q_\psi(f_N)$.
\end{proof}

\begin{lemma} \label{prop1}
For $l=-1$ and $l\in U_{\hg}^\circ$, let $\Psi_l$ be the Hecke character  $\Psi_l = (\psi_{2l+2})^{-1} \mathbf{N}^{l+2}$ of conductor $c$ and infinity type $(l+2,-l)$.
Then there exists a number
$
 \cEul^{HR}_N(l) \in \Q_{\psi}(f_N)
$
such that the following equality of critical $L$-values hold:
\[
			L(\testg_{2l+3}\otimes \testf, l+2) =\cEul^{HR}_N(l)   \cdot L(f, \Psi_l^{-1}, 0).
\]
Moreover, for $l=-1$ we have  $\cEul^{\mathrm{HR}}_N(-1) \ne 0$.
\end{lemma}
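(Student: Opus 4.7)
The plan is to factor the Rankin $L$-function $L(\testg_{2l+3} \otimes \testf, s)$ as an explicit product of bad local Euler factors times the twisted $L$-function $L(f, \psi_{2l+2}, s)$, and then translate the latter at $s=l+2$ to $L(f, \Psi_l^{-1}, 0)$ via the identity $\Psi_l^{-1} = \psi_{2l+2} \mathbf{N}_K^{-(l+2)}$ of Hecke characters combined with \eqref{translation}.

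First I would identify $g_{2l+3} = \theta_{\psi_{2l+2}}$, which is exactly the content of the CM Hida family construction recalled just before \eqref{useful}. Artin formalism (Shapiro's lemma / the projection formula) then yields
$$
\varrho_{g_{2l+3}} \otimes \varrho_f \;=\; \mathrm{Ind}_K^\Q(\psi_{2l+2} \otimes \varrho_{f|G_K}),
$$
so that $L(g_{2l+3} \otimes f, s)$ and $L(f, \psi_{2l+2}, s)$ agree Euler factor by Euler factor at all primes $q\nmid N$. Arguing as in the proof of Lemma \ref{proposition1}, and using that $\testg_{2l+3}$ and $\testf$ are eigenvectors for the full Hecke algebra of level $N$, this identification extends to a global Euler-product decomposition
$$
L(\testg_{2l+3}\otimes \testf, s) \;=\; \mathscr{E}^{(l)}_N(s) \cdot L(f, \psi_{2l+2}, s),
$$
where $\mathscr{E}^{(l)}_N(s)$ is a finite product of rational functions in $\{q^{-s}\}_{q\mid N}$ with coefficients in $\Q_\psi(f_N)$, computed by a formula formally analogous to \eqref{grossplus} (but with the weight-$1$ Hecke eigenvalues of $g$ replaced by those of $g_{2l+3}$).

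Applying \eqref{translation} to the compatible system $\varrho_{f|G_K}\otimes \psi_{2l+2}$ gives $L(f,\psi_{2l+2}, l+2) = L(f,\Psi_l^{-1},0)$, since by construction $\Psi_l^{-1} = \psi_{2l+2} \mathbf{N}_K^{-(l+2)}$. Setting $\cEul^{HR}_N(l) := \mathscr{E}^{(l)}_N(l+2) \in \Q_\psi(f_N)$ yields the claimed identity. For $l=-1$ we have $\psi_0=\psi$, $\testg_1=\testg$ and $l+2=1$, so the construction collapses to Lemma \ref{proposition1} evaluated at $s=1$, giving $\cEul^{HR}_N(-1)=\cEul_N(1)\in \Q_\psi(f_N)^\times$, in particular non-zero.

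The main obstacle I anticipate is justifying uniformly in $l$ that $\testg_{2l+3}$ remains an eigenvector for the \emph{bad} Hecke operators: this is what allows the Euler-product argument of Lemma \ref{proposition1} to be repeated verbatim and forces $\mathscr{E}^{(l)}_N(s)$ to be a rational function of $\{q^{-s}\}_{q\mid N}$ over $\Q_\psi(f_N)$. This rests on the rigidity of the local components at primes $q\mid N$, $q\neq p$ along the CM Hida family $\hg$: the correction coefficients $\mu_d(g)$ that promote $g$ to an eigenform at level $N$ are the same in all weights. Once this is in hand, tracking the bad Euler factors is routine bookkeeping, and the non-vanishing of $\cEul^{HR}_N(-1)$ follows immediately from Lemma \ref{proposition1}.
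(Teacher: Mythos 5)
Your argument is essentially the paper's own proof: the paper likewise compares the Euler products of $L(\testg_{2l+3}\otimes\testf,s)$ and $L(f,\Psi_l^{-1},s)$, which agree at every prime $q\nmid N$ because $g_{2l+3}=\theta_{\psi_{2l+2}}$, defines $\cEul^{HR}_N(l)$ as the resulting finite discrepancy of bad Euler factors at $q\mid N$ evaluated at $s=l+2$, and for $l=-1$ identifies it with $\cEul_N(1)\in\Q_\psi(f_N)^\times$ via Lemma \ref{proposition1}. The concern you raise at the end about $\testg_{2l+3}$ being an eigenvector for the bad Hecke operators in all weights is not addressed (nor needed) in the paper's two-line proof, so your write-up is, if anything, a slightly more detailed rendering of the same route.
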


\begin{proof} The $L$-functions $L(\testg_{2l+3}\otimes \testf,s)$ and $L(g_{2l+3}\otimes f, l+2)$ are defined by an Euler product with exactly the same local factors at all primes $q$ except possible for the primes $q\mid N$. From the definition of L-factors we have that $ \cEul^{\mathrm{HR}}_N(l)$ lies in $\Q_{\psi}(f_N)$ for all $l$.

For $l=-1$ we have $\cEul_N^{HR}(-1) = \cEul_N(1) \in \Q_{\psi}(f_N)^\times$ by Lemma \ref{proposition1}
\end{proof}

Secondly, we have the following classical formula for the Petersson product, due essentially to H. Petersson (cf.\,\cite[Theorem 5.1]{Hi81}, \cite[Satz 6]{Pe}): 

\begin{proposition}\label{prop2} Set the index $\Im(N) :=[\mathrm{SL}_2(\Z): \Gamma_0(N)] = \prod_{q^{n_q}\mid \mid N} q^{n_q-1}(q+1)$ and define
\[
\mathfrak{f}_{Pet}(l) := \frac{\Im(N)}{\Im(D_Kc^2)}\cdot \frac{(2l+2)!}{2^{4l+4} \pi^{2l+3}} \cdot \frac{h_c \cdot \sqrt{D_Kc^2}}{w_c}
\]
where $h_c$ and $w_c$ are the class number and the number of roots of unity of the order $\cO_c$ of conductor $c$. Then
\begin{equation}\label{sym-formula}
\langle g_{2l+3}^*,g_{2l+3}^*\rangle_N =   \mathfrak{f}_{Pet}(l) \cdot L(\psi^2_{2l+2},2l+3).
\end{equation}

\end{proposition}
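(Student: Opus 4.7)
The plan is to derive the formula via a direct application of the classical Petersson--Rankin method, specialised to the CM newform $g_{2l+3}=\theta_{\psi_{2l+2}}$, whose primitive level is exactly $D_Kc^2$. A preliminary reduction from level $N$ down to the primitive level is required: since $g_{2l+3}$ is a newform of level $D_Kc^2$ and since $\langle g^*,g^*\rangle_{D_Kc^2}=\langle g,g\rangle_{D_Kc^2}$ (as Atkin--Lehner identifies the two up to a scalar of absolute value one), one obtains
$$\langle g^*_{2l+3}, g^*_{2l+3}\rangle_N \;=\; \frac{\Im(N)}{\Im(D_Kc^2)} \cdot \langle g_{2l+3}, g_{2l+3}\rangle_{D_Kc^2},$$
which accounts for the first ratio appearing in $\mathfrak{f}_{Pet}(l)$ and reduces the problem to an equality at the primitive level.

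The next step is to apply Shimura's identity \eqref{Shimura} in the diagonal case $f_k = g_l = g_{2l+3}$, in which the weight of the Eisenstein series degenerates to zero, and extract the residue at the pole $s=k=2l+3$ coming from the weight-zero real-analytic Eisenstein series via the Kronecker limit formula. The outcome is the classical Petersson--Rankin formula
$$\langle g_{2l+3}, g_{2l+3}\rangle_{D_Kc^2} \;=\; \frac{(k-1)!}{2^{2k-1}\pi^{k+1}}\cdot \Res_{s=k} L(g_{2l+3}\otimes g^*_{2l+3}, s),$$
with $k=2l+3$, which is the content of \cite[Theorem 5.1]{Hi81} and \cite[Satz 6]{Pe}. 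A direct computation accounts for the factor $(2l+2)!/(2^{4l+4}\pi^{2l+3})$.

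The final algebraic input is the factorisation of the Rankin--Selberg convolution of two CM forms, obtained via Frobenius reciprocity applied to $\mathrm{Ind}_K^\Q(\psi_{2l+2})\otimes \mathrm{Ind}_K^\Q(\bar\psi_{2l+2})$. Modulo Euler factors at ramification one has
$$L(g_{2l+3}\otimes g^*_{2l+3}, s) \;=\; \zeta_K(s-2l-2)\cdot L(\psi^2_{2l+2}, s),$$
where the first factor comes from $\psi_{2l+2}\bar\psi_{2l+2} = \mathbf N_K^{2l+2}$ and the second from the remaining summand $\psi_{2l+2}\cdot \bar\psi_{2l+2}^\sigma$, which in our ring-class setting coincides with $\psi^2_{2l+2}$. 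Because $\psi^2_{2l+2}$ is non-trivial, $L(\psi^2_{2l+2},s)$ is holomorphic at $s=2l+3$ and supplies the special value on the right-hand side, while the entire pole is carried by the zeta factor. Dirichlet's classical residue formula for the Dedekind zeta of the order $\cO_c$ of conductor $c$ inside $\cO_K$ then contributes precisely the factor $h_c\sqrt{D_Kc^2}/w_c$ required by $\mathfrak{f}_{Pet}(l)$.

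The principal technical difficulty is the careful bookkeeping of Euler factors at primes dividing $D_Kc$, where the factorisation above is only correct up to local adjustments on both sides. This is also where the replacement of the maximal order $\cO_K$ by the order $\cO_c$ of conductor $c$ becomes necessary, so that $h_c$, $w_c$ and $\sqrt{D_Kc^2}$ appear instead of $h_K$, $w_K$ and $\sqrt{D_K}$. These local computations, combined with the explicit constants produced by the Petersson--Rankin formula and the residue of the Dedekind zeta, ultimately assemble into the explicit form of $\mathfrak{f}_{Pet}(l)$ given in the statement.
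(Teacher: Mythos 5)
Your strategy is the same one the paper follows (in its appendix, after Hida \cite{Hi81} and Petersson \cite{Pe}): reduce from level $N$ to the primitive level $D_Kc^2$ via the index $\Im(N)/\Im(D_Kc^2)$, apply the Petersson/Rankin--Selberg residue formula to the convolution of $g_{2l+3}$ with itself, factor that convolution using the CM structure, and conclude with the class number formula. Your factorisation through $\Ind_\Q^K(\psi_{2l+2})\otimes\Ind_\Q^K(\bar\psi_{2l+2})=\Ind(\mathbf N_K^{2l+2})\oplus\Ind(\psi_{2l+2}^2)$ is equivalent to the paper's route through the symmetric square (since $\zeta_K=\zeta\cdot L(\chi_K)$), so there is no genuine difference of method.

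There is, however, a concrete quantitative gap, and it is of exactly the size of the constant the proposition is about. With the Petersson product \eqref{Peterssondef} in force (an integral over $\Gamma_0(D_Kc^2)\backslash\mathcal H$, \emph{not} normalised by the volume), the residue formula reads $\Res_{s=k}\sum_n |a_n(g_{2l+3})|^2 n^{-s} = \frac{3}{\pi\,\Im(D_Kc^2)}\cdot\frac{(4\pi)^k}{(k-1)!}\,\langle g_{2l+3},g_{2l+3}\rangle_{D_Kc^2}$ with $k=2l+3$, and passing from this Dirichlet series to your $L(g_{2l+3}\otimes g^*_{2l+3},s)$ as defined by the four-factor Euler product \eqref{EulHR} costs an additional factor $\zeta_{D_Kc^2}(2s-2k+2)$, i.e. $\zeta_{D_Kc^2}(2)$ at $s=k$. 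Your displayed identity $\langle g_{2l+3},g_{2l+3}\rangle_{D_Kc^2}=\frac{(k-1)!}{2^{2k-1}\pi^{k+1}}\Res_{s=k}L(g_{2l+3}\otimes g^*_{2l+3},s)$ omits both, so it is off by $\frac{\pi^2}{6}\,\Im(D_Kc^2)/\zeta_{D_Kc^2}(2)=D_Kc^2\big/\prod_{q\mid D_Kc^2}(1-q^{-1})$. This is not a harmless constant: the factor $D_Kc^2$ in it is precisely what turns the $\sqrt{D_Kc^2}$ sitting in the \emph{denominator} of the residue $\frac{2\pi h_c}{w_c\sqrt{D_Kc^2}}$ of the zeta function of the order into the $\sqrt{D_Kc^2}$ in the \emph{numerator} of $\mathfrak{f}_{Pet}(l)$, while the $\prod_{q\mid D_Kc^2}(1-q^{-1})$ must cancel against the bad Euler factors at $q\mid D_Kc$ that you deferred to ``bookkeeping'' (in the paper this is the replacement of $\zeta$ by $\zeta_D$ and the resulting $\prod_{q\mid D}(1-q^{-1})$ in its residue computation). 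As written, your pieces therefore do not assemble to $\mathfrak{f}_{Pet}(l)$; once the volume factor $\frac{\pi}{3}\Im(D_Kc^2)$ and the $\zeta_{D_Kc^2}(2)$ are restored, and the local factors at $q\mid c$ are matched against the zeta function of $\cO_c$, the argument becomes exactly the paper's proof.
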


\vspace{0.2cm}


Let us introduce now the Hecke  character  $\Phi_l = \psi_{2l+2}^{-2}\mathbf{N}^{2l+3}$ of infinity type $(2l+3, -2l-1)$, and note that $\Phi_l$ lies in the region of interpolation for the Katz $p$-adic $L$-function. Since $L(\Phi_l^{-1},s) = L(\psi^2_{2l+2},s+2l+3)$, it follows from \eqref{sym-formula} that
\begin{align*}
\langle \testg_{2l+3}^*, \testg_{2l+3}^* \rangle_{l,N} &= \cEul_N^{Pet}(l) \cdot \langle g_{2l+3}, g_{2l+3} \rangle_{l,N} \\
&= \cEul_N^{Pet}(l) \cdot \fpet (l)\cdot L(\Phi_l^{-1},0)
\end{align*}
where $\cEul_N^{Pet}(l) \in \Q_{\psi}$ is a non-zero number arising from the discrepancy at the primes $q\mid N$ of the local Hecke polynomials of $g_{2l+3}$ and $\testg_{2l+3}$.


Set
\[
	\cEul_N(l) = \frac{\cEul_N^{\mathrm{HR}}(l)}{\cE_c \cdot \cEul_N^{Pet}(l)} \qquad \text{and} \qquad \mathfrak f_\infty(l) :=  \frac{\mathfrak{f}_{\mathrm{HR}}(l) \cdot \mathfrak{f}_{K}(\Phi_l)}{\mathfrak{f}_{\mathrm{BDP}}(\Psi_l) \cdot \mathfrak{f}_{Pet}(l)}
\]
and define the function
$$
\mathfrak f: U^\circ_{\hg}  \lra \C_p, \quad   \mathfrak f(l) := \cEul_N(l) \cdot \mathfrak f_\infty(l).
$$

\begin{theorem} \label{factor} The function $\mathfrak f$ interpolates to a $p$-adic analytic function on $U_{\hg}$ and the following factorisation of $p$-adic $L$-series holds:
\begin{equation}\label{main-fact}
L_p(\htg,\testf)(2l+3) \times  L_p(K)(\Phi_l) = \mathfrak{f}(l) \cdot L_p(f,K)(\Psi_l).
\end{equation}
\end{theorem}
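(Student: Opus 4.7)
The plan is to verify the identity \eqref{main-fact} at the dense set of classical points $l\in U_{\hg}^{\circ}\subset U_{\hg}$ by combining the interpolation formulas of the three $p$-adic $L$-functions involved, and then to extend to all of $U_{\hg}$ by $p$-adic analytic continuation. At such a classical $l$, I would first evaluate $L_p(\htg,\testf)(2l+3)$ via \eqref{1variable}, then use Lemma \ref{prop1} to rewrite the classical Rankin value as $L(\testg_{2l+3}\otimes \testf, l+2)=\cEul_N^{\mathrm{HR}}(l)\cdot L(f,\Psi_l^{-1},0)$, and express the Petersson norm $\langle \testg_{2l+3}^{*},\testg_{2l+3}^{*}\rangle_{l,N}$ via Proposition \ref{prop2} (plus the bad-prime discrepancy $\cEul_N^{\mathrm{Pet}}(l)$) as $\cEul_N^{\mathrm{Pet}}(l)\cdot \fpet(l)\cdot L(\Phi_l^{-1},0)$. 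The second factor $L_p(K)(\Phi_l)$ is evaluated through the Katz interpolation formula \eqref{int-Katz}, since $\Phi_l\in \Sigma_K^{(2)}$ has infinity type $(2l+3,-2l-1)$; while on the right-hand side the Bertolini--Darmon--Prasanna formula \eqref{int-BDP} gives $L_p(f,K)(\Psi_l)$ in terms of $L(f,\Psi_l^{-1},0)$, noting that $\Psi_l$ has infinity type $(l+2,-l)\in \Sigma_{f,K}^{(2)}$.

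Combining these evaluations, the classical value $L(\Phi_l^{-1},0)$ appearing in the Petersson norm cancels against the one inside the Katz formula, and $L(f,\Psi_l^{-1},0)$ cancels between the two sides. Because both $\Phi_l$ and $\Psi_l$ satisfy $\kappa_1-\kappa_2=4l+4$, the period ratios $\Omega_p^{4l+4}/\Omega^{4l+4}$ coming from Katz and from BDP match and cancel. The archimedean factors collapse to $\mathfrak{f}_{\infty}(l)$ and the bad-prime discrepancies to $\cEul_N(l)$ by construction, leaving only the local identity of $p$-Euler factors
\begin{equation*}
\ehr(l)\cdot \ekatz(\Phi_l) = \ebdp(\Psi_l)
\end{equation*}
to verify. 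A direct computation using \eqref{useful}, writing $\beta=\psi_{2l+2}(\bar\wp)$, shows that $\ekatz(\Phi_l)=(1-\beta^{2}p^{-2l-2})(1-\beta^{2}p^{-2l-3})=\cE_0(2l+3)\cE_1(2l+3)$ is exactly the denominator of $\ehr(l)$, and (using $\alpha_p(f)\beta_p(f)=p$) that $\ebdp(\Psi_l)$ factors as $(1-\alpha_p(f)\beta p^{-(l+2)})^{2}(1-\beta_p(f)\beta p^{-(l+2)})^{2}=\cE(2l+3,2,l+2)$, which is exactly the numerator of $\ehr(l)$.

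The main technical obstacle will be this Euler-factor matching, which requires careful bookkeeping of Frobenius eigenvalues across three different $p$-adic $L$-functions and the precise relation between the character values of $\psi_{2l+2}$, $\Phi_l$ and $\Psi_l$ at the primes $\wp$ and $\bar\wp$. Once \eqref{main-fact} is established on $U_{\hg}^{\circ}$, the extension to $U_{\hg}$ follows by $p$-adic continuity: $L_p(\htg,\testf)$ is meromorphic on $U_{\hg}$ by construction, the compositions $l\mapsto L_p(K)(\Phi_l)$ and $l\mapsto L_p(f,K)(\Psi_l)$ are $p$-adic analytic as Katz's and Bertolini--Darmon--Prasanna's $p$-adic $L$-functions composed with the continuous families $l\mapsto \Phi_l$ and $l\mapsto \Psi_l$ determined by the unitary character $\lambda$ chosen earlier, and the auxiliary function $\mathfrak{f}(l)$ extends analytically to $U_{\hg}$ because the transcendental archimedean factors ($\pi$-powers, factorials, $\sqrt{D_K}$, Gauss sums) combine into an essentially constant expression in $l$ under the ratio defining $\mathfrak{f}_{\infty}(l)$. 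Since $U_{\hg}^{\circ}$ accumulates in $U_{\hg}$, the identity then extends to all of $U_{\hg}$, completing the proof.
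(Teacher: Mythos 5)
Your proposal is correct and follows essentially the same route as the paper: evaluate all three $p$-adic $L$-functions at the dense set of classical points $U_{\hg}^{\circ}$ via their interpolation formulas together with Proposition \ref{prop2}, cancel the periods and the classical $L$-values using the key Euler-factor identity $\ehr(l)\,\ekatz(\Phi_l)=\ebdp(\Psi_l)$ coming from \eqref{useful}, and conclude by density once $\mathfrak f$ is seen to extend $p$-adically analytically. The only (minor) difference is that the paper justifies the analytic extension of $\mathfrak f_\infty$ by explicitly computing $\omega(f,\Psi_l)$ and deriving the closed formula \eqref{foo}, whereas you assert the archimedean factors are "essentially constant"; they are not constant (they involve $\psi_{2l+2}(\mathfrak N)$ and powers of $c^2/N$), but they are visibly $p$-adically interpolable, so your argument is sound once phrased that way.
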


\begin{proof}

In our setting, the Bertolini-Darmon-Prasanna interpolation formula reads as 
\begin{align} \label{1variableBDP}
	L(f,\Psi_l^{-1},0) &= \frac{1}{\ebdp(\Psi_l)   \cdot \fbdp(\Psi_l) \cdot  \cE_c}\cdot \frac{ \Omega^{4l+4}  }{\Omega_p^{4l+4}} \cdot  L_p(f,K)(\Psi_l), \\
	\mathfrak{e}_{\mathrm{BDP}}(\Psi_l) &= (1-a_p(f) \psi_{2l+2}(\bar\wp) p^{-2-l} + \psi_{2l+2}(\bar\wp) ^2 p^{-2l-3} )^2,  \notag \\
\mathfrak{f}_{\mathrm{BDP}}(\Psi_l)  &= \left(\frac{2\pi}{c \sqrt{ D_K}}\right)^{2l+1}  l!(l+1)!  \cdot 2^{\sharp q\mid (D_K,N_E)} \cdot\omega(f,\Psi_l)^{-1}, \notag \\
\cE_c &= \prod_{q\mid c}\frac{q-\chi_K(q)}{q-1} \notag .
\end{align}
After replacing $k_1=2l+3$ and $k_2=-2l-1$ in the Katz interpolation formula we find:
\begin{align}\label{1variableKatz}
L(\Phi_l^{-1},0) &=  \frac{1}{\mathfrak{e}_{K}(\Phi_l) \cdot \mathfrak{f}_{K}(\Phi_l)} \cdot \frac{\Omega^{4l+4}}{\Omega_p^{4l+4}} \cdot L_p(K)( \Phi_l), \\
\mathfrak{e}_{K}(\Phi_l) &= (1-\psi_{2l+2}^{-2}(\wp) p^{2l+2}) (1-\frac{\psi_{2l+2}^{2}(\bar\wp)}{p^{2l+3}}),  \notag \\
\mathfrak{f}_{K}(\Phi_l) &= \left( \frac{2 \pi}{\sqrt{D_K}}\right)^{2l+1}(2l+2)!  \notag 
\end{align}
Subsequently substitute equations \eqref{sym-formula}, \eqref{1variableBDP} and \eqref{1variableKatz} into equation \eqref{1variable}. Notice that by equation \eqref{useful} we have $\ehr(l) \ekatz(l) = \ebdp(l)$. An elementary manipulation immediately shows that the decomposition formula \eqref{main-fact} holds at all the points in $U_{\hg}^\circ$.

Since $U_{\hg}^\circ$ is dense in $U_{\hg}$, in order to complete the proof of the theorem it only remains to prove the claim that  $\mathfrak f$ extends to a $p$-adic analytic function on $U_{\hg}$. 

Recall that $\mathfrak f(l)$ was defined as the product of two functions $\cEul_N(l)$ and $\mathfrak f_\infty(l)$. The Euler factors encoded in $\cEul_N(l)$ are rational functions on powers of primes $q \mid N$ and hence $\cEul_N$ extends to a $p$-adic analytic function on $U_{\hg}$. 

As for the function $\mathfrak f_\infty(l)$, we first notice that from \cite[equation (5.1.11)]{BDP1} we can compute that $\omega(f,\Psi_l) = (-1/N)^{l+1} \cdot \psi_{2l+2}(\mathfrak N)^{-1}$. Hence $\omega(f,\Psi_l)$ also interpolates to $p$-adic analytic function on $U_{\hg}$. Combining the explicit recipes given in the text for $\mathfrak{f}_{\mathrm{HR}}(l)$, $\mathfrak{f}_{K}(\Phi_l)$, $\mathfrak{f}_{\mathrm{BDP}}(\Psi_l)$ and $\mathfrak{f}_{Pet}(l)$ one readily checks that
\begin{equation}\label{foo}
\mathfrak{f}_\infty(l)  =  - \frac{\Im(D_Kc^2)}{\Im(N)}\cdot \frac{N\cdot 2^{-\sharp {q\mid (D_K,N_E)}}}{h_c\cdot D_K}  \cdot \frac{\psi_{2l+2}(\mathfrak N)}{c^{-2l} \cdot N^{l+1}}.
\end{equation}
From this explicit description the claim follows.
\end{proof}


Thanks to the above result we can already prove Theorem \ref{main-theta}. We evaluate  \eqref{main-fact} at the point of weight one in $U_{\hg}$ arising when we set $l=-1$. In this case 
Theorem \ref{factor} asserts that
\begin{equation}\label{main-fact-one}
L_p(\htg,\testf)(1) \times  L_p(K)( \psi^{-2}\mathbf{N}) = \mathfrak{f}(-1) \cdot L_p(f,K)(\psi^{-1} \mathbf{N}).
\end{equation}

Since $L_p(K)((\psi)^{-2}\mathbf{N}_K) = L_p(K)(\psi^{-2})$ (c.f. \cite[p. 90-91]{Gr}), Proposition \ref{vitasalva} combined with equations \eqref{thm:bdp-gz} and \eqref{Katz-GZ} show that
\begin{equation}\label{main1}
\int_{\testgamma}\testf \cdot E_{1,\chi_{_{K,N}}} = \lambda(\testf, \testg) \cdot \frac{\log^2_{E,p}(P_\psi)}{\log_p(u_g)}.
\end{equation}
where 	
$$
\lambda(\testf, \testg) = \cEul_N(-1) \cdot  \mathfrak f_\infty(-1) \cdot \frac{\mathfrak f_p(f,\psi)}{\mathfrak f_p(\psi)}.
$$

From equation \eqref{foo} we hence derive that
$$
\mathfrak{f}_{\infty}(-1) = -\frac{\Im(D_Kc^2)}{\Im(N)} \cdot \frac{ N}{D_Kc^2 \cdot h_c} \cdot 2^{-\sharp {q\mid (D_K,N_E)}}\cdot \psi(\mathfrak N) \in \Q_\psi(f_N)^\times.
$$
By Lemma \ref{prop1}, $\lambda(\testf, \testg) \in \Q_{\psi}(f_N)^\times$ hence the first statement of Theorem \ref{main-theta} follows.

For the second part of our main theorem, assume that $N=D_K=N_E$ and $c=1$, and put $\testf=f$, $\testg=g$ and $\testh = h = E_{1,\chi_{_{K}}}$. In this setting $\cEul_N(-1) = 1$ and $$\lambda(f,g) =  \mathfrak f_{\infty}(-1) \cdot \frac{ \mathfrak f_p(f,\psi)}{\mathfrak f_p(\psi)}.$$ Moreover it follows from genus theory that $g_K := [\Cl_K: \Cl_K^2] = 2^{\#q\mid D_K -1}$ so that $\mathfrak f_\infty(-1) = \frac{-1}{2h_Kg_K}$.
The proof follows after combining that with formulae \eqref{Katz-GZ} and \eqref{thm:bdp-gz}.



\end{document}